\newtheorem{lem}{Lemma}[section]
\newtheorem{thm}{Theorem}[section]
\newtheorem{cor}{Corollary}[section]
\newtheorem{remark}{Remark}
\newtheorem{exa}{Example}
\theoremstyle{definition}
\begin{document}
\title{Integral Cayley Graphs over Dihedral Groups\footnote{Supported
by the National Natural Science Foundation of China (Grant Nos. 11261059, 11531011).}}
\author{{\small Lu Lu, \ \ Qiongxiang Huang\footnote{
Corresponding author.
Email: huangqx@xju.edu.cn},\ \ Xueyi Huang}\\[2mm]\scriptsize
College of Mathematics and Systems Science,
\scriptsize Xinjiang University, Urumqi, Xinjiang 830046, P.R. China}

\date{}
\maketitle {\flushleft\large\bf Abstract }
In this paper, we give a necessary and sufficient condition for the integrality of Cayley graphs over the dihedral group $D_n=\langle a,b\mid a^n=b^2=1,bab=a^{-1}\rangle$. Moreover, we also obtain some simple sufficient conditions for the integrality of Cayley graphs over $D_n$ in terms of the Boolean algebra of $\langle a\rangle$, from which we find infinite classes of integral Cayley graphs over $D_n$. In particular, we completely determine all integral Cayley graphs over the dihedral group $D_p$ for a prime $p$.
\vspace{0.1cm}
\begin{flushleft}
\textbf{Keywords:} Cayley graph; integral graph; dihedral group; character
\end{flushleft}
\textbf{AMS Classification:} 05C50
\section{Introduction}\label{s-1}
A graph $X$ is said to be \emph{integral} if all eigenvalues of the adjacency matrix of $X$ are integers. The property was first defined by Harary and Schwenk \cite{Harary}, who suggested the problem of classifying integral graphs. This problem initiated a significant investigation among algebraic graph theorists, trying to construct and classify integral graphs. Although this problem is easy to state, it turns out to be extremely hard. It has been attacked by many mathematicians during the past 40 years and it is still wide open.

Since the general problem of classifying integral graphs seems too difficult, graph theorists started to investigate special classes of graphs, including trees, graphs with bounded degrees, regular graphs, and Cayley graphs. The first considerable result on integral trees was given by Watanabe and Schwenk in \cite{Watanabe-1} and \cite{Watanabe-2}. Then many mathematicians constructed some infinite classes of integral trees with bounded diameters (\cite{Cheng,Wang,Brouwer}). It is heartening that Csikv\'{a}ri \cite{Csikvari} constructed integral trees with arbitrary large diameters. With regard to integral regular graphs, the first significant result was given by Bussemaker and Cvetkovi\'{c} \cite{Bussemaker} in 1976, which showed that there are only 13 connected cubic integral graphs. About 20 years later, in 2000, Stevanovi\'{c} considered the 4-regular integral graphs avoiding $\pm3$ in the spectrum and gave the possible spectra of 4-regular bipartite integral graphs without $\pm3$ as their eigenvalues in \cite{Stevanovic}. Recently, Lepovi\'{c} \cite{Lepovic} proposed that there exist exactly $93$ non-regular, bipartite integral graphs with maximum degree $4$.

Given a finite group $G$ and a subset $1\not\in S\subseteq G$ with $S=S^{-1}$, the Cayley graph $X(G,S)$ has vertex set $G$ and two vertices $a,b$ are adjacent if $a^{-1}b\in S$. In 2005, So \cite{So} gave a complete characterization of integral circulant graphs. Later, Abdollahi and Vatandoost \cite{Abdollahi} showed that there are exactly seven connected cubic integral Cayley graphs in 2009. About the same year, Klotz and Sander \cite{Klotz} proved that, for an abelian group $G$, if the Cayley graph $X(G,S)$ is integral then $S$ belongs to the Boolean algebra $B(\mathcal{F}_G)$ generalized by the subgroups of $G$. Moreover, they conjectured that the converse is also true, which has been proved by Alperin and Peterson \cite{Alperin}.

In 2014, Cheng, Terry and Wong (cf. \cite{Cheng}, Corollary 1.2) presented that the normal Cayley graphs over symmetric groups are integral (a Cayley graph is said to be \emph{normal} if its generating set $S$ is closed under conjugation). It seems that there are few works about the characterization of integral Cayley graphs over non-abelian groups. As a simple attempt to this aspect, we try to characterize integral Cayley graphs over dihedral groups. At first, by using the expression of spectra of Cayley graphs, we obtain the necessary and sufficient conditions for the integrality of Cayley graphs over the dihedral group $D_n$ (see Theorems \ref{thm-3-1} and \ref{thm-3-2}). In terms of atoms of Boolean algebra of $D_n$ we also obtain a simple sufficient condition (see Corollary \ref{cor-3-3}) and a necessary condition  (see Corollary \ref{cor-3-4}) for the integrality of Cayley graphs over $D_n$. In particular, we determine all integral Cayley graphs over $D_p$ for a prime $p$ (see Theorem \ref{cor-4-1}).

\section{The spectra of Cayley graphs over dihedral groups}\label{s-2}
First of all, we review some basic definitions and notions of representation theory for latter use.

Let $G$ be a finite group and $V$ a $n$-dimensional vector space over $\mathbb{C}$. A \emph{representation} of $G$ on $V$ is a group homomorphism $\rho:G\rightarrow GL(V)$, where $GL(V)$ denotes the group of automorphisms of $V$. The \emph{degree} of $\rho$ is the dimension of $V$. Two representations $\rho_1:G\rightarrow GL(V_1)$ and $\rho_2:G\rightarrow GL(V_2)$  of $G$ are called \emph{equivalent}, written as $\rho_1\sim\rho_2$, if there exists an isomorphism $T: V_1\rightarrow V_2$ such that $T\rho_1(g)=\rho_2(g)T$ for all $g\in G$.

Let $\rho:G\rightarrow GL(V)$ be a representation. The \emph{character} $\chi_{\rho}:G\rightarrow \mathbb{C}$ of $\rho$ is defined by setting $\chi_{\rho}(g)=Tr(\rho(g))$ for $g\in G$, where $Tr(\rho(g))$ is the trace of the representation matrix of $\rho(g)$ with respect to some basis of $V$. The \emph{degree} of the character $\chi_{\rho}$ is just the degree of $\rho$, which equals to $\chi_{\rho}(1)$. A subspace $W$ of $V$ is said to be
\emph{$G$-invariant} if $\rho(g)w\in W$ for each $g\in G$ and $w\in W$. If $W$ is a $G$-invariant subspace of $V$, then the restriction of $\rho$ on $W$, i.e., $\rho_{|W}:G\rightarrow GL(W)$, is a representation
of $G$ on $W$. Obviously, $\{1\}$ and $V$ are always $G$-invariant subspaces, which are called \emph{trivial}. We say that $\rho$ is an \emph{irreducible representation} and $\chi_{\rho}$ an \emph{irreducible character} of $G$, if $V$ has no nontrivial $G$-invariant subspaces. One can refer to \cite{Jean} for more information about representation theory.

If we  build synthetically a vector space $\mathbb{C}G$ whose basis consists of  the elements of $G$, i.e.,
$$\mathbb{C}G=\{\sum_{g\in G}c_gg\mid c_g\in \mathbb{C}\},$$
then the \emph{(left) regular representation} of $G$ is the homomorphism $L:G\rightarrow GL(\mathbb{C}G)$ defined by
$$L(g)\sum_{h\in G}c_hh=\sum_{h\in G}c_hgh=\sum_{x\in G}c_{g^{-1}x}x$$
for each $g\in G$. The following result about regular representation is well known.
\begin{lem}[\cite{Jean}]\label{lem-2-1}
Let $L$ be the regular representation of $G$. Then
$$L\sim d_1\rho_1\oplus d_2\rho_2\oplus\cdots\oplus d_h\rho_h,$$
where $\rho_1,\ldots,\rho_h$ are all non-equivalent irreducible representations of $G$ and $d_i$ is the degree of $\rho_i$ ($1\le i\le h$).
\end{lem}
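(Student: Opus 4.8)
The plan is to argue via character theory, using that over $\mathbb{C}$ every representation is completely reducible and is determined up to equivalence by its character. Since $\rho_1,\ldots,\rho_h$ is a complete list of the pairwise non-equivalent irreducible representations of $G$, Maschke's theorem lets us write $L\sim\bigoplus_{i=1}^h m_i\rho_i$ for uniquely determined non-negative integers $m_i$, so the whole task reduces to showing $m_i=d_i$ for every $i$.

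The key computation is the character $\chi_L$ of the regular representation, which I would read off directly from the definition given above: $L(g)$ sends the basis element $h$ to $gh$, so its matrix in the basis $\{h\mid h\in G\}$ is a permutation matrix, and $\chi_L(g)=Tr(L(g))$ equals the number of $h\in G$ fixed by the map $h\mapsto gh$. Hence $\chi_L(1)=|G|$, while $\chi_L(g)=0$ for every $g\neq 1$, since $gh=h$ forces $g=1$.

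Finally I would apply the orthonormality of the irreducible characters with respect to the Hermitian inner product $\langle\phi,\psi\rangle=\frac{1}{|G|}\sum_{g\in G}\phi(g)\overline{\psi(g)}$, which recovers each multiplicity as $m_i=\langle\chi_L,\chi_{\rho_i}\rangle$. Substituting the two values of $\chi_L$ gives $m_i=\frac{1}{|G|}\,\chi_L(1)\,\overline{\chi_{\rho_i}(1)}=\frac{1}{|G|}\cdot|G|\cdot d_i=d_i$, as required. The only substantive input is the character orthogonality relations (a consequence of Schur's lemma), which I would cite from \cite{Jean} rather than reprove, so no real obstacle arises; as a consistency check, comparing degrees on both sides of the decomposition recovers the classical identity $\sum_{i=1}^h d_i^2=|G|$.
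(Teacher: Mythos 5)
Your argument is correct; the paper itself gives no proof of this lemma, citing it directly from Serre \cite{Jean}, and your character-theoretic computation (that $\chi_L$ is $|G|$ at the identity and $0$ elsewhere, combined with orthonormality of irreducible characters to get $m_i=\langle\chi_L,\chi_{\rho_i}\rangle=d_i$) is exactly the standard proof found in that reference. Nothing further is needed.
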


Suppose that $X=X(G,S)$ is an undirected Cayley graph without loops, that is, $S$  is inverse closed and does not contain the identity.  Let $L$ be the regular representation of $G$, and $R(g)$ the representation matrix corresponding to $L(g)$ for $g\in G$. Babai \cite{Babai} expressed the adjacency matrix of $X(G,S)$ in terms of $R(g)$.
\begin{lem}[\cite{Babai}]\label{lem-2-2}
Let $G$ be a finite group of order $n$, and let $S\subseteq G\setminus \{1\}$ be such that $S=S^{-1}$. Then the adjacency matrix $A$ of $X(G,S)$ can be expressed as $A=\sum_{s\in S}R(s)$, where $R(s)$ is the representation matrix corresponding to $L(s)$.
\end{lem}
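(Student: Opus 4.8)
The plan is to verify the asserted matrix identity entrywise: both $A$ and $\sum_{s\in S}R(s)$ are $n\times n$ matrices whose rows and columns are indexed by the elements of $G$, so it suffices to compare their $(x,y)$-entries for all $x,y\in G$. First I would fix once and for all the ordered basis $\{g : g\in G\}$ of $\mathbb{C}G$ used to form the representation matrices, so that $R(s)$ is the permutation matrix determined by $R(s)e_y=e_{sy}$; equivalently, $\bigl(R(s)\bigr)_{x,y}=1$ if $x=sy$ and $\bigl(R(s)\bigr)_{x,y}=0$ otherwise. With respect to the same ordering of $G$, the adjacency matrix $A$ of $X(G,S)$ has $A_{x,y}=1$ exactly when $x$ and $y$ are adjacent in $X(G,S)$.

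Next I would compute the $(x,y)$-entry of $\sum_{s\in S}R(s)$. By the description of $R(s)$ it equals $\#\{s\in S : sy=x\}=\#\{s\in S : s=xy^{-1}\}$, which is $1$ when $xy^{-1}\in S$ and $0$ otherwise, since $S$ is a set and hence contributes at most one term. Thus $\sum_{s\in S}R(s)$ is precisely the $(0,1)$-matrix recording whether $xy^{-1}\in S$. Up to the (harmless) choice of multiplying on the left or on the right in the definitions of $L$ and of the Cayley graph, this is exactly the adjacency relation of $X(G,S)$; the hypotheses on $S$ enter here, namely $1\notin S$ makes every diagonal entry $0$ (so $X(G,S)$ is loopless), while $S=S^{-1}$ together with the identity $R(s)^{\top}=R(s^{-1})$ shows $\sum_{s\in S}R(s)$ is symmetric, so it is genuinely the adjacency matrix of an undirected graph. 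A more conceptual packaging of the same computation: extend $L$ by linearity to an algebra homomorphism $\mathbb{C}G\to M_n(\mathbb{C})$; then by linearity $\sum_{s\in S}R(s)=L\!\left(\sum_{s\in S}s\right)$, and one checks directly that the matrix of $L\!\left(\sum_{s\in S}s\right)$ in the chosen basis is $A$.

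I do not expect a substantive obstacle in this lemma; it is essentially a definition-chasing exercise. The only point requiring care is keeping the left/right conventions consistent between $L(g)$, the ordering of the basis of $\mathbb{C}G$, and the edge relation of $X(G,S)$, and invoking $S=S^{-1}$ and $1\notin S$ at the right place to guarantee that the resulting matrix is symmetric with zero diagonal. Once those conventions are pinned down, the two matrices agree entry by entry.
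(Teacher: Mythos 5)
The paper states this lemma as a quoted result from Babai and gives no proof of its own, so there is nothing to compare against; your entrywise verification is the standard argument and is correct in substance. One point deserves more than the parenthetical ``harmless'' you give it: with the paper's stated conventions --- left regular representation $L(s)y=sy$, hence $\bigl(R(s)\bigr)_{x,y}=1$ iff $x=sy$, and adjacency $x\sim y$ iff $x^{-1}y\in S$ --- your computation yields $\bigl(\sum_{s\in S}R(s)\bigr)_{x,y}=[\,xy^{-1}\in S\,]$, while $A_{x,y}=[\,x^{-1}y\in S\,]$. For non-abelian $G$ these are genuinely different matrices (take $G=S_3$, $S=\{(12)\}$: then $x^{-1}y=(12)$ does not force $xy^{-1}=x(12)x^{-1}$ to lie in $S$), so the identity does not hold literally entry by entry; it holds after the relabeling $g\mapsto g^{-1}$ of the vertex set, which conjugates one matrix into the other by a permutation matrix (equivalently, one should pair left multiplication in $L$ with right multiplication in the edge rule, or vice versa). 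Since the two matrices are permutation-similar they are cospectral, and the paper only ever uses Lemma 2.2 through the spectrum (Lemma 2.3 onward), so nothing downstream is affected --- but a complete proof should state this isomorphism $g\mapsto g^{-1}$ explicitly rather than assert harmlessness. Your remarks that $1\notin S$ kills the diagonal and that $R(s)^{\top}=R(s^{-1})$ plus $S=S^{-1}$ gives symmetry are correct and well placed.
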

Let $\rho_1,\ldots,\rho_h$ be all non-equivalent irreducible representations of $G$ with degrees $d_1,\ldots,d_h$ ($d_1^2+\cdots+d_h^2=n$), respectively. Denote by $R_i(g)$ the representation matrix corresponding to $\rho_i(g)$ for $g\in G$ and $1\le i\le h$. Therefore, by Lemma \ref{lem-2-1},
$$L\sim d_1\rho_1\oplus\cdots\oplus d_h\rho_h,$$
which means that there exists an orthogonal matrix $P$ such that
$$PR(g)P^{-1}=d_1R_1(g)\oplus d_2R_2(g)\oplus\cdots\oplus d_hR_h(g)$$
for each $g\in G$. Therefore, we have
$$PAP^{-1}=P(\sum_{s\in S}R(s))P^{-1}=d_1\sum_{s\in S}R_1(s)\oplus\cdots \oplus d_h\sum_{s\in S}R_h(s).$$
Suppose that $\lambda_{i,1},\ldots,\lambda_{i,d_i}$ are all  eigenvalues of the matrix $\sum_{s\in S}R_i(s)$ ($1\le i\le h$), the spectrum of $X(G,S)$ is given by
$$\textrm{Spec}(X(G,S))=\{[\lambda_{1,1}]^{d_1},\ldots,[\lambda_{1,d_1}]^{d_1},\ldots,[\lambda_{h,1}]^{d_h},\ldots,[\lambda_{h,d_h}]^{d_h}\}.$$
Thus we have
\begin{lem}[\cite{Babai}]\label{lem-2-3}
The spectrum of  $X(G,S)$ is given by
$$\textrm{Spec}(X(G,S))=\{[\lambda_{1,1}]^{d_1},\ldots,[\lambda_{1,d_1}]^{d_1},\ldots,[\lambda_{h,1}]^{d_h},\ldots,[\lambda_{h,d_h}]^{d_h}\},$$
where
$\lambda_{i,1}^t+\lambda_{i,2}^t+\cdots+\lambda_{i,d_i}^t=\sum_{s_1,\ldots,s_t\in S}\chi_{\rho_i}(\prod_{k=1}^ts_t)$
for any natural number $t$.
\end{lem}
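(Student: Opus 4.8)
The plan is to read the spectrum directly off the block decomposition of the adjacency matrix established in the paragraph preceding the lemma, and then to verify the power-sum formula by computing the trace of a matrix power.

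First I would recall that, with $P$ as above, $PAP^{-1}=d_1\sum_{s\in S}R_1(s)\oplus\cdots\oplus d_h\sum_{s\in S}R_h(s)$, so $A$ is similar to a block-diagonal matrix in which the block $\sum_{s\in S}R_i(s)$ occurs $d_i$ times. Since similar matrices have the same characteristic polynomial, the multiset of eigenvalues of $A$ is the union over $i$ of $d_i$ copies of the eigenvalue multiset $\{\lambda_{i,1},\ldots,\lambda_{i,d_i}\}$ of $\sum_{s\in S}R_i(s)$, which is exactly the displayed form of $\textrm{Spec}(X(G,S))$. (One may also observe that, because $S=S^{-1}$, each $\rho_i$ can be chosen unitary, so that $\sum_{s\in S}R_i(s)$ is Hermitian with real spectrum, consistent with $A$ being real symmetric.)

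Next I would establish the trace identity. Fix $i$ and a natural number $t$. Then $\lambda_{i,1}^{t}+\cdots+\lambda_{i,d_i}^{t}=\mathrm{Tr}\big(\big(\sum_{s\in S}R_i(s)\big)^{t}\big)$, since the trace of any power of a matrix equals the corresponding power sum of its eigenvalues counted with algebraic multiplicity. Expanding the $t$-th power distributively gives $\big(\sum_{s\in S}R_i(s)\big)^{t}=\sum_{s_1,\ldots,s_t\in S}R_i(s_1)R_i(s_2)\cdots R_i(s_t)$; because $\rho_i$ is a group homomorphism, $R_i(s_1)R_i(s_2)\cdots R_i(s_t)=R_i(s_1 s_2\cdots s_t)$, and taking traces term by term yields $\mathrm{Tr}\big(\big(\sum_{s\in S}R_i(s)\big)^{t}\big)=\sum_{s_1,\ldots,s_t\in S}\mathrm{Tr}\big(R_i(s_1 s_2\cdots s_t)\big)=\sum_{s_1,\ldots,s_t\in S}\chi_{\rho_i}(s_1 s_2\cdots s_t)$, which is the claimed formula.

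I do not expect any genuine obstacle: the content is precisely the bookkeeping above, resting on Lemmas \ref{lem-2-1} and \ref{lem-2-2} (already invoked in the text) together with the elementary facts that similarity preserves the spectrum and that traces convert matrix powers into power sums. The only point worth a remark is the meaning of the multiplicities in the statement: inside each block the $\lambda_{i,j}$ are listed with algebraic multiplicity, which causes no ambiguity since each $\sum_{s\in S}R_i(s)$ is diagonalizable (indeed Hermitian for a suitable choice of $\rho_i$).
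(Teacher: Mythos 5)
Your proposal is correct and follows essentially the same route the paper takes: it reads the spectrum off the block decomposition $PAP^{-1}=d_1\sum_{s\in S}R_1(s)\oplus\cdots\oplus d_h\sum_{s\in S}R_h(s)$ already derived in the preceding paragraph, and then obtains the power-sum identity by the standard trace computation (which also silently corrects the statement's typo $\prod_{k=1}^t s_t$ to $s_1s_2\cdots s_t$). No gaps.
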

Denote by $D_n=\langle a,b\mid a^n=b^2=1,bab=a^{-1}\rangle$ the dihedral group of order $2n$. Now we  list the character table of $D_n$.
\begin{lem}[\cite{Jean}]\label{lem-2-4}
The character table of  $D_n$ is given in Table 1 if $n$ is odd, and  in Table 2 otherwise, where $\psi_i$ and $\chi_j$ are irreducible characters of degree one and two, respectively, and $1\le h\le [\frac{n-1}{2}]$.
\renewcommand{\arraystretch}{1}
\begin{table}[ht]
\floatsetup{floatrowsep=quad,captionskip=7pt} \tabcolsep=20pt
\begin{center}{\footnotesize
\begin{floatrow}
\ttabbox{\caption{\label{tab-1}\footnotesize{Character table of $D_n$ for odd $n$}}}{
\begin{tabular}{ccc}
\toprule
      &$a^k$& $ba^k$ \\
  \midrule
  $\psi_{1}$&$1$& $1$\\
  $\psi_{2}$&$1$& $-1$\\
  $\chi_{_h}$&$2\cos(\frac{2kh\pi}{n})$& $0$\\
  \bottomrule
\end{tabular}}
\ttabbox{\caption{\label{tab-2}\footnotesize{Character table of $D_n$ for even $n$}}}{
\begin{tabular}{ccc}
  \toprule
     &$a^k$& $ba^k$ \\
  \midrule
  $\psi_{1}$&$1$& $1$\\
  $\psi_{2}$&$1$& $-1$\\
  $\psi_{3}$&$(-1)^k$& $(-1)^k$\\
  $\psi_{4}$&$(-1)^k$& $(-1)^{k+1}$\\
  $\chi_{_h}$&$2\cos(\frac{2kh\pi}{n})$& 0\\
  \bottomrule
 \end{tabular}}
\end{floatrow}}
\end{center}
\end{table}
\end{lem}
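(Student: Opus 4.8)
The plan is to compute the conjugacy classes of $D_n$, pin down the number and degrees of the irreducible characters by a dimension count, construct all of them explicitly on the generators $a,b$, and then simply read off the traces to fill in Tables \ref{tab-1} and \ref{tab-2}. All the needed background (irreducible characters separate conjugacy classes, $\sum_i d_i^2 = |G|$, linear characters correspond to characters of the abelianization) is standard and may be quoted from \cite{Jean}.

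First I would determine the conjugacy classes. From $a^j(ba^k)a^{-j} = ba^{k-2j}$ and $a^j a^k a^{-j} = a^k$, $ba^k a^{-k} b^{-1}\cdot$ etc., the rotation $a^k$ is conjugate exactly to $a^{\pm k}$, while $ba^k$ is conjugate to every $ba^{k+2j}$. When $n$ is odd, $2j$ ranges over all residues mod $n$, so the $n$ reflections form a single class; together with $\{1\}$ and $\{a^k,a^{-k}\}$ for $1\le k\le\frac{n-1}{2}$ this gives $\frac{n+3}{2}$ classes. When $n$ is even, the reflections split into $\{ba^{2k}\}$ and $\{ba^{2k+1}\}$, $a^{n/2}$ is central, and we get $\{1\},\{a^{n/2}\}$, the classes $\{a^k,a^{-k}\}$ for $1\le k\le\frac n2-1$, and the two reflection classes, i.e.\ $\frac n2+3$ classes. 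Next, since the commutator subgroup is $D_n' = \langle a^2\rangle$, the abelianization is $\mathbb{Z}_2$ for odd $n$ and $\mathbb{Z}_2\times\mathbb{Z}_2$ for even $n$, yielding the linear characters $\psi_1,\psi_2$ (resp.\ $\psi_1,\dots,\psi_4$) with the listed values: $\psi_1$ trivial, $\psi_2\colon a\mapsto 1,\ b\mapsto -1$, and for even $n$ also $\psi_3\colon a\mapsto -1,\ b\mapsto 1$ and $\psi_4\colon a\mapsto -1,\ b\mapsto -1$; the entries under $a^k$ and $ba^k$ then follow by multiplicativity.

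Then for each $h$ with $1\le h\le[\frac{n-1}{2}]$ I would exhibit the degree-two representation $\rho_h$ defined on generators by
$$\rho_h(a)=\begin{pmatrix}\zeta^h & 0\\ 0 & \zeta^{-h}\end{pmatrix},\qquad \rho_h(b)=\begin{pmatrix}0 & 1\\ 1 & 0\end{pmatrix},\qquad \zeta=e^{2\pi i/n},$$
and check that the defining relations $a^n=b^2=1$, $bab=a^{-1}$ are satisfied, so this is a bona fide representation. Taking traces gives $\chi_h(a^k)=\zeta^{hk}+\zeta^{-hk}=2\cos\!\big(\tfrac{2kh\pi}{n}\big)$, and since $\rho_h(ba^k)$ is anti-diagonal, $\chi_h(ba^k)=0$, matching both tables. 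In the stated range $2h\not\equiv 0\pmod n$, so $\zeta^h\neq\zeta^{-h}$; hence the only $\rho_h(a)$-invariant lines are the two coordinate axes, neither of which is fixed by $\rho_h(b)$, so $\rho_h$ is irreducible. Distinct $h$ in the range give distinct values $2\cos(2h\pi/n)$ at $a$, so the $\rho_h$ are pairwise inequivalent and inequivalent to the linear characters. Finally $\sum_i d_i^2 = 2\cdot 1 + \frac{n-1}{2}\cdot 4 = 2n$ for odd $n$ (resp.\ $4\cdot 1 + (\tfrac n2-1)\cdot 4 = 2n$ for even $n$), which equals $|D_n|$, and the number of characters produced equals the number of conjugacy classes; hence the list is complete.

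The computations are entirely routine; the only point needing care is the parity case analysis --- correctly splitting the reflection conjugacy classes and identifying the abelianization --- together with noting that the boundary index $h=\frac n2$ for even $n$ is legitimately excluded, since there $\rho_{n/2}(a)=-I$ and $\rho_{n/2}$ decomposes as $\psi_3\oplus\psi_4$.
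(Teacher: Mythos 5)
Your proof is correct and complete: the conjugacy-class computation, the identification of the abelianization $D_n/\langle a^2\rangle$ giving the two (resp.\ four) linear characters, the explicit two-dimensional representations $\rho_h$ with the irreducibility and inequivalence checks, and the final count $\sum_i d_i^2=2n$ (together with the boundary remark that $\rho_{n/2}$ decomposes as $\psi_3\oplus\psi_4$ for even $n$) are all accurate. The paper offers no proof of this lemma at all, simply citing Serre's book; what you have written is precisely the standard derivation found there, so there is nothing to compare beyond noting that your argument fills in the omitted details correctly.
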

By Lemmas \ref{lem-2-3} and \ref{lem-2-4}, we get the spectra of Cayley graphs over $D_n$ immediately.
\begin{thm}\label{thm-2-1}
Let $D_n$ be a dihedral group and $S\subseteq D_n\setminus\{1\}$ satisfying $S=S^{-1}$. Then
$$\textrm{Spec}(X(D_n,S))=\{[\lambda_i]^1;[\mu_{h1}]^2, [\mu_{h2}]^2\mid i=1,\ldots,m;h=1,2,\ldots,[\frac{n-1}{2}]\},$$
where $m=2$ if $n$ is odd and $m=4$ otherwise, and
\begin{equation}\label{spec-Dn-1}
\left\{\begin{array}{ll}\lambda_i=\sum_{s\in S}\psi_{i}(s),& \mbox{$i=1,\ldots,m$;}\\
\mu_{h1}+\mu_{h2}=\sum_{s\in S}\chi_{_h}(s),& h=1,2,\ldots,[\frac{n-1}{2}];\\
\mu_{h1}^2+\mu_{h2}^2=\sum_{s_1,s_2\in S}\chi_{_h}(s_1s_2),& h=1,2,\ldots,[\frac{n-1}{2}].
\end{array}\right.
\end{equation}
\end{thm}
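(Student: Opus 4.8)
The plan is to specialize Lemma~\ref{lem-2-3} to $G=D_n$, substituting the explicit list of irreducible characters supplied by Lemma~\ref{lem-2-4}. First I would recall that $D_n$ has exactly $m$ pairwise inequivalent irreducible representations of degree one (these are $\psi_1,\psi_2$ when $n$ is odd, and $\psi_1,\psi_2,\psi_3,\psi_4$ when $n$ is even) together with the $[\frac{n-1}{2}]$ inequivalent degree-two representations afforded by $\chi_{_1},\ldots,\chi_{_{[(n-1)/2]}}$; the identity $m\cdot 1^2+[\frac{n-1}{2}]\cdot 2^2=2n$ confirms that this is the complete list of irreducibles. Lemma~\ref{lem-2-3} then tells us that $\mathrm{Spec}(X(D_n,S))$ consists of one eigenvalue $\lambda_i$ of multiplicity one for each degree-one character $\psi_i$ ($1\le i\le m$), and two eigenvalues $\mu_{h1},\mu_{h2}$, each of multiplicity two, for each degree-two character $\chi_{_h}$ ($1\le h\le[\frac{n-1}{2}]$); this is exactly the shape asserted in the theorem.

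It then remains to read off the defining relations \eqref{spec-Dn-1}. For a degree-one character $\psi_i$ the matrix $\sum_{s\in S}R_i(s)$ is the scalar $\sum_{s\in S}\psi_i(s)$, so putting $t=1$ in Lemma~\ref{lem-2-3} gives $\lambda_i=\sum_{s\in S}\psi_i(s)$ at once. For a degree-two character $\chi_{_h}$ the matrix $\sum_{s\in S}R_h(s)$ is $2\times 2$ with eigenvalues $\mu_{h1},\mu_{h2}$; taking $t=1$ yields $\mu_{h1}+\mu_{h2}=\sum_{s\in S}\chi_{_h}(s)$ and taking $t=2$ yields $\mu_{h1}^2+\mu_{h2}^2=\sum_{s_1,s_2\in S}\chi_{_h}(s_1s_2)$. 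I would note in passing that these two relations already determine the unordered pair $\{\mu_{h1},\mu_{h2}\}$: the product is recovered as $\mu_{h1}\mu_{h2}=\tfrac12\big((\mu_{h1}+\mu_{h2})^2-(\mu_{h1}^2+\mu_{h2}^2)\big)$, hence so is the characteristic polynomial $x^2-(\mu_{h1}+\mu_{h2})x+\mu_{h1}\mu_{h2}$, so higher values of $t$ are not needed.

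There is essentially no genuine obstacle here; the statement is a direct translation of Lemmas~\ref{lem-2-3} and~\ref{lem-2-4}, and the only care required is bookkeeping. One should check that the single index range $1\le h\le[\frac{n-1}{2}]$ correctly enumerates the degree-two characters in both parities---it equals $(n-1)/2$ for odd $n$ and $n/2-1$ for even $n$---and that the total multiplicity $m+4[\frac{n-1}{2}]$ equals $|D_n|=2n$ in each case. It is also worth remarking that the labels $\mu_{h1}$ and $\mu_{h2}$ are determined only up to interchange, which causes no difficulty in what follows.
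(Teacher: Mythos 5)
Your proposal is correct and is exactly the paper's argument: the paper derives Theorem~\ref{thm-2-1} immediately from Lemmas~\ref{lem-2-3} and~\ref{lem-2-4}, and your write-up simply fills in the same bookkeeping (the count of irreducibles, the multiplicity pattern, and the $t=1,2$ power sums). No issues.
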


Let $A$, $B$ be two subsets of a group $G$.  For any character $\chi$ of $G$, we denote $\chi(A)=\sum_{a\in A}\chi(a)$ and $\chi(AB)=\sum_{a\in A,b\in B}\chi(ab)$. Particularly, $\chi(A^2)=\sum_{a_1,a_2\in A}\chi(a_1a_2)$.
\begin{thm}\label{thm-3-1}
Let $D_n=\langle a,b\mid a^n=b^2=1,bab=a^{-1}\rangle$ be the dihedral group, and let $S=S_1\cup S_2\subseteq D_n\setminus\{1\}$ be such that $S=S^{-1}$, where $S_1\subseteq\langle a\rangle$ and $S_2\subseteq b\langle a\rangle$. Then $X(D_n,S)$ is integral if and only if the following two conditions hold for $1\le h\le [\frac{n-1}{2}]$:
\vspace{-0.2cm}
\begin{enumerate}[(i)]
\item $\chi_{_h}(S_1)$, $\chi_{_h}(S_1^2)+\chi_{_h}(S_2^2)$ are integers;
\vspace{-0.2cm}
\item $\Delta_h(S)=2[\chi_{_h}(S_1^2)+\chi_{_h}(S_2^2)]-[\chi_{_h}(S_1)]^2$ is a square number.
    \vspace{-0.2cm}
\end{enumerate}
\end{thm}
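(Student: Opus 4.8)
The approach is to read the spectrum of $X(D_n,S)$ straight off Theorem~\ref{thm-2-1} and show that integrality is governed entirely by the two-dimensional characters $\chi_{_h}$. First I would dispose of the eigenvalues $\lambda_1,\dots,\lambda_m$ coming from the linear characters: inspecting the columns $a^k$ and $ba^k$ of Tables~\ref{tab-1} and~\ref{tab-2}, every $\psi_i$ takes only the values $\pm1$, so $\lambda_i=\sum_{s\in S}\psi_i(s)$ is a sum of $|S|$ terms each equal to $\pm1$, hence an integer for every admissible $S$. Therefore $X(D_n,S)$ is integral if and only if $\mu_{h1},\mu_{h2}\in\mathbb{Z}$ for all $1\le h\le[\frac{n-1}{2}]$, and the whole problem reduces to these numbers.

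Next I would rewrite the two relations for $\mu_{h1},\mu_{h2}$ in Theorem~\ref{thm-2-1} in the notation $\chi_{_h}(A)$, $\chi_{_h}(A^2)$. Since $\chi_{_h}$ vanishes on the coset $b\langle a\rangle$, we get $\chi_{_h}(S)=\chi_{_h}(S_1)+\chi_{_h}(S_2)=\chi_{_h}(S_1)$. For $\chi_{_h}(S^2)$ I would split the double sum over $S\times S$ into the four parts $S_i\times S_j$: using $S_1\subseteq\langle a\rangle$, $S_2\subseteq b\langle a\rangle$ together with $a^ib=ba^{-i}$ and $(ba^i)(ba^j)=a^{j-i}$, every product $s_1s_2$ with $(s_1,s_2)\in(S_1\times S_2)\cup(S_2\times S_1)$ lies in $b\langle a\rangle$ and so contributes $0$, while products from $S_1\times S_1$ and $S_2\times S_2$ stay in $\langle a\rangle$ and contribute $\chi_{_h}(S_1^2)$ and $\chi_{_h}(S_2^2)$ respectively. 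Hence
\[
\mu_{h1}+\mu_{h2}=\chi_{_h}(S_1),\qquad \mu_{h1}^{2}+\mu_{h2}^{2}=\chi_{_h}(S_1^2)+\chi_{_h}(S_2^2),
\]
so $\mu_{h1},\mu_{h2}$ are the two roots of the quadratic $x^{2}-\chi_{_h}(S_1)x+\tfrac12\big([\chi_{_h}(S_1)]^{2}-\chi_{_h}(S_1^2)-\chi_{_h}(S_2^2)\big)$, whose discriminant is precisely $\Delta_h(S)=2[\chi_{_h}(S_1^2)+\chi_{_h}(S_2^2)]-[\chi_{_h}(S_1)]^{2}$. Since the adjacency matrix is real symmetric, $\mu_{h1},\mu_{h2}$ are real, so $\Delta_h(S)=(\mu_{h1}-\mu_{h2})^{2}\ge 0$.

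The necessity of (i) and (ii) is then immediate: if $\mu_{h1},\mu_{h2}\in\mathbb{Z}$, then their sum $\chi_{_h}(S_1)$ and their sum of squares $\chi_{_h}(S_1^2)+\chi_{_h}(S_2^2)$ are integers, and $\Delta_h(S)=(\mu_{h1}-\mu_{h2})^{2}$ is a perfect square. For sufficiency, assume (i) and (ii); note that (i) already forces $\Delta_h(S)\in\mathbb{Z}$, so that (ii) (together with $\Delta_h(S)\ge 0$) gives $\sqrt{\Delta_h(S)}\in\mathbb{Z}$ and $\mu_{h1,h2}=\tfrac12\big(\chi_{_h}(S_1)\pm\sqrt{\Delta_h(S)}\big)$. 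The only delicate point, and the main obstacle, is to show the factor $\tfrac12$ does no harm, i.e. that $\chi_{_h}(S_1)\pm\sqrt{\Delta_h(S)}$ is always even. Here I would use a parity argument: from the definition $\Delta_h(S)=2[\chi_{_h}(S_1^2)+\chi_{_h}(S_2^2)]-[\chi_{_h}(S_1)]^{2}$ we get $\Delta_h(S)\equiv[\chi_{_h}(S_1)]^{2}\pmod 2$, and since an integer and its square have the same parity, $\sqrt{\Delta_h(S)}\equiv\chi_{_h}(S_1)\pmod 2$; hence $\chi_{_h}(S_1)\pm\sqrt{\Delta_h(S)}$ is even and $\mu_{h1},\mu_{h2}\in\mathbb{Z}$. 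Combining this with the first paragraph gives the claimed equivalence.
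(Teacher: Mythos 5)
Your proposal is correct, and up to the last step it is the same argument as the paper's: you reduce to the degree-two characters (the linear characters only take values $\pm1$, so the $\lambda_i$ are automatically integers), use $\chi_{_h}(S_1S_2)=\chi_{_h}(S_2S_1)=0$ to get $\mu_{h1}+\mu_{h2}=\chi_{_h}(S_1)$ and $\mu_{h1}^2+\mu_{h2}^2=\chi_{_h}(S_1^2)+\chi_{_h}(S_2^2)$, and read off (i) and (ii) from the resulting quadratic, whose discriminant is exactly $\Delta_h(S)=(\mu_{h1}-\mu_{h2})^2$. The only place you diverge is in closing the sufficiency: the paper observes that conditions (i) and (ii) make $\mu_{h1},\mu_{h2}$ rational, and then invokes the fact that eigenvalues of an integer matrix are algebraic integers, so rational eigenvalues are integers. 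You instead handle the factor $\tfrac12$ by a parity computation: since $\Delta_h(S)\equiv[\chi_{_h}(S_1)]^2\pmod 2$ and an integer and its square have the same parity, $\sqrt{\Delta_h(S)}\equiv\chi_{_h}(S_1)\pmod 2$, so the numerator $\chi_{_h}(S_1)\pm\sqrt{\Delta_h(S)}$ is even. Both closures are valid; yours is more elementary and self-contained (no appeal to algebraic integrality), while the paper's is the more robust tool and is reused elsewhere in the paper (e.g.\ in Theorem \ref{thm-3-2} and Corollary \ref{cor-3-4}) to upgrade rational character sums to integers in situations where no parity argument is available.
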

\begin{proof}
 Note that $S_1S_2=\{s_1s_2\mid  s_1\in S_1, s_2\in S_2\}\subseteq b\langle a\rangle$ and $S_2S_1\subseteq b\langle a\rangle$. By Lemma \ref{lem-2-4}, we have $\chi_h(S_1S_2)=0=\chi_h(S_2S_1)$. Thus
$$\begin{array}{lll}
\chi_{_h}(S)&=&\sum_{s_1\in S_1}\chi_{_h}(s)+\sum_{s_2\in S_2}\chi_{_h}(s_2)=\chi_h(S_1),\\
\chi_{_h}(S^2)&=&\sum_{s_1,s_2\in S}\chi_{_h}(s_1s_2)\\
&=&\chi_{_h}(S_1^2)+\chi_{_h}(S_1S_2)+\chi_{_h}(S_2S_1)+\chi_{_h}(S_2^2)\\
&=&\chi_{_h}(S_1^2)+\chi_{_h}(S_2^2).
\end{array}
$$
Then the spectrum of $X(D_n,S)$ presented in (\ref{spec-Dn-1}) can be rewritten as
\begin{equation}\label{spec-Dn-2}
\left\{\begin{array}{ll}\lambda_i=\sum_{s\in S}\psi_{i}(s)=\psi_i(S),& i=1,\ldots,m;\\
\mu_{h1}+\mu_{h2}=\chi_{_h}(S_1),&  h=1,2,\ldots,[\frac{n-1}{2}];\\
\mu_{h1}^2+\mu_{h2}^2=\chi_{_h}(S_1^2)+\chi_{_h}(S_2^2),&  h=1,2,\ldots,[\frac{n-1}{2}].\\
\end{array}\right.\\
\end{equation}

First suppose that $X(D_n,S)$ is integral.  From  (\ref{spec-Dn-2}), we know that $\chi_{_h}(S_1)$ and $\chi_{_h}(S_1^2)+\chi_{h}(S_2^2)$ must be integers, and thus (i) holds. Since $\mu_{h1}$ and  $\mu_{h2}$ are integers, and they are also the roots of the following quadratic equation:
\begin{equation}\label{spec-Dn-3}
x^2-\chi_{_h}(S_1)\cdot x+\frac{1}{2}[\chi_{_h}(S_1)^2-(\chi_{_h}(S_1^2)+\chi_{_h}(S_2^2)]=0,
\end{equation}
the discriminant $\Delta_h(S)=2[\chi_{_h}(S_1^2)+\chi_{_h}(S_2^2)]-[\chi_{_h}(S_1)]^2$ must be a square number, and thus (ii) follows.

Next suppose that (i) and (ii) hold. Then the solutions $\mu_{h1}$ and  $\mu_{h2}$ of  (\ref{spec-Dn-3}) must be rational. This implies that $\mu_{h1}$ and  $\mu_{h2}$ must be integers because they are algebraic integers. Additionally, the eigenvalues $\lambda_i$ are always integers. Hence $X(D_n,S)$ is integral.
\end{proof}
Let $C_n=\langle a\rangle$ be the cyclic group of order $n$. It is well known in \cite{Jean} that the irreducible characters of $C_n$ can be presented by
\begin{equation}\label{phi-1}\phi_h(a^k)=e^{\frac{2hk\pi}{n}\mathbf{i}}, \mbox{ where $0\leq h\leq n-1$.}\end{equation} Particularly, $\phi_0(a^k)=1$. By Schur's lemma, we have \begin{equation}\label{phi-2}\frac{1}{n}\sum_{k=0}^{n-1}\phi_h(a^k)=\langle \phi_h,\phi_0\rangle=0 \mbox{ for $1\leq h\leq n-1$}.\end{equation}
Theoretically, Theorem \ref{thm-3-1} gives  a necessary and sufficient condition for the integrality of Cayley graphs over dihedral groups. As an application of Theorem \ref{thm-3-1}  we give a class of integral $X(D_n,S)$.
\begin{cor}\label{exa-1}
For odd number $m$, let $D_{2m}=\langle a,b\mid a^{2m}=b^2=1,bab=a^{-1}\rangle$ be the dihedral group of order $4m$. Let $S_1=\{a^m\}, S_2= b\langle a^2\rangle$ and $S=S_1\cup S_2$, then $X(D_{2m},S)$ is connected and integral.
\end{cor}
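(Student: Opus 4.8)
The plan is to check connectivity by hand and then apply Theorem~\ref{thm-3-1}. First note that $S$ satisfies the hypotheses of that theorem: $S_1=\{a^m\}\subseteq\langle a\rangle$, $S_2=b\langle a^2\rangle\subseteq b\langle a\rangle$, $1\notin S$, and $S=S^{-1}$ because $(a^m)^{-1}=a^{-m}=a^m$ and $(ba^{2i})^{-1}=a^{-2i}b=ba^{2i}$. For connectivity, I would use the relation $(ba^{2i})(ba^{2j})=a^{2(j-i)}$, valid for all $i,j$, which shows $\langle S_2\rangle\supseteq\langle a^2\rangle\cup\{b\}$; adjoining $a^m$ and invoking that $m$ is odd, so that $\gcd(2,m)=1$ and hence $\langle a^2,a^m\rangle=\langle a\rangle$, we get $\langle S\rangle\supseteq\langle a,b\rangle=D_{2m}$. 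Thus $X(D_{2m},S)$ is connected.

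For integrality I would verify conditions (i) and (ii) of Theorem~\ref{thm-3-1} for every $h$ with $1\le h\le[\frac{2m-1}{2}]=m-1$. By Lemma~\ref{lem-2-4}, $\chi_h(a^k)=2\cos(kh\pi/m)$ and $\chi_h(ba^k)=0$, so $\chi_h(S_1)=\chi_h(a^m)=2\cos(h\pi)=2(-1)^h$ and $\chi_h(S_1^2)=\chi_h(a^{2m})=\chi_h(1)=2$. For $S_2^2$, the identity $(ba^{2i})(ba^{2j})=a^{2(j-i)}$ shows that as $(i,j)$ runs over the $m^2$ ordered pairs from $S_2\times S_2$, each power $a^{2r}$ with $0\le r\le m-1$ is produced exactly $m$ times, so
$$\chi_h(S_2^2)=m\sum_{r=0}^{m-1}2\cos\!\Big(\frac{2rh\pi}{m}\Big)=m\sum_{r=0}^{m-1}\Big(e^{\frac{2rh\pi}{m}\mathbf{i}}+e^{-\frac{2rh\pi}{m}\mathbf{i}}\Big)=0,$$
where the last equality is (\ref{phi-2}) applied inside $\langle a^2\rangle\cong C_m$, legitimate precisely because $1\le h\le m-1$ forces $m\nmid h$. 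Hence $\chi_h(S_1)$ and $\chi_h(S_1^2)+\chi_h(S_2^2)=2$ are integers, which is (i), and $\Delta_h(S)=2\cdot 2-[2(-1)^h]^2=0$ is a square, which is (ii). Theorem~\ref{thm-3-1} then yields that $X(D_{2m},S)$ is integral.

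I do not foresee a substantive obstacle; the whole argument is a short computation. The one place that rewards care is $\chi_h(S_2^2)$: one must count multiplicities correctly in the product multiset $S_2^2$ (each class $a^{2r}$ appears $m$ times, not once) and then notice that the cyclic group governing the exponents $2r$ is $C_m$, not $C_{2m}$, so that the vanishing of the character sum is driven by $m\nmid h$ rather than $2m\nmid h$ — which is exactly why the range $h\le m-1$ suffices.
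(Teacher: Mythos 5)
Your proposal is correct and follows essentially the same route as the paper: identify $S_2^2$ as the multiset $m*\langle a^2\rangle$, compute $\chi_h(S_1)=\pm2$, $\chi_h(S_1^2)=2$, $\chi_h(S_2^2)=0$ via the vanishing character sum over $C_m$ (valid since $1\le h\le m-1$ forces $m\nmid h$), and apply Theorem~\ref{thm-3-1}. Your version is slightly more explicit than the paper's on two points it leaves implicit --- the generation argument for connectivity and the evaluation $\Delta_h(S)=0$ --- but these are elaborations, not a different method.
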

\begin{proof}
It is easy to see that $S=S^{-1}$ generates $D_n$, and so $X(D_{2m},S)$ is connected.
By Lemma \ref{lem-2-4}, we have $\chi_{_h}(S_1)=2\cos(\frac{2hm\pi}{2m})=\pm2$, and $\chi_{_h}(S_1^2)=2$ due to $S_1^2=\{1\}$. By simple calculation, $S_2^2$ consists of all elements of $\langle a^2\rangle$ in which each one appears $m$ times, that is, $S_2^2=m*\langle a^2\rangle$. From (\ref{phi-1}), (\ref{phi-2}) and Lemma \ref{lem-2-4}, we have
$$
\begin{array}{ll}
\chi_{_h}(S_2^2)&=m\sum_{k=0}^{m-1}\chi_{_h}(a^{2k})=m\sum_{k=0}^{m-1}2\cos\frac{2h\cdot2k\pi}{2m}=m\sum_{k=0}^{m-1}2\cos\frac{2hk\pi}{m}\\
&=m\sum_{k=0}^{m-1}(\cos\frac{2hk\pi}{m}+\cos\frac{2h(m-k)\pi}{m})\\
&=m(\sum_{k=0}^{m-1}e^{\frac{2hk\pi}{m}\mathbf{i}}+\sum_{k=0}^{m-1}e^{\frac{2h(m-k)\pi}{m}\mathbf{i}})\\
&=2m\sum_{k=0}^{m-1}e^{\frac{2hk\pi}{m}\mathbf{i}}\\
&=2m\sum_{k=0}^{m-1}\phi_h((a^2)^k)\\
&=2m\cdot m\langle \phi_h,\phi_0\rangle=0.
\end{array}
$$
By Theorem \ref{thm-3-1}, $X(D_{2m},S)$ is integral.
\end{proof}
In the next section, we will simplify the result of Theorem \ref{thm-3-1} and provide infinite classes of integral Cayley graphs over dihedral groups in terms of Boolean algebra on cyclic groups.

\section{ The necessary and sufficient condition for the integrality of $X(D_n,S)$}

Alperin and Peterson \cite{Alperin} give a necessary and sufficient condition for the integrality of Cayley graphs over abelian groups, in which they introduce some notions such as Boolean algebra and atoms for a group. Let $G$ be a finite group, and $\mathcal{F}_G$ the set consisting of all subgroups of $G$. Then the \emph{Boolean algebra} $B(G)$ is the set whose  elements are obtained by arbitrary finite intersections, unions, and
complements of the elements in $\mathcal{F}_G$. The minimal elements of $B(G)$ are called \emph{atoms}. Clearly, distinct atoms are disjoint. Alperin and Peterson show that each element of $B(G)$ is the union of some atoms, and each atom of $B(G)$ has the form $[g]=\{x\mid \langle x\rangle=\langle g\rangle,x\in G\}$, where $g\in G$.

We say that a subset $S\subseteq G$ is \emph{integral} if $\chi(S)=\sum_{s\in S}\chi(s)$ is an integer for every character $\chi$ of $G$. From Lemma \ref{lem-2-3} we know that $S$ must be an integral set if the Cayley graph $X(G,S)$ is integral. The following elegant result gives a simple characterization of integral Cayley graphs over the abelian group $G$  by using integral set and  atoms of $B(G)$.
\begin{thm}[\cite{Alperin}, Theorem 5.1 and Corollary 7.2]\label{lem-3-1}
Let $G$ be an abelian group. Then $S\subseteq G$ is integral iff $S\in B(G)$ iff $S$ is a union of atoms of $B(G)$ iff  $X(G,S)$ is integral.
\end{thm}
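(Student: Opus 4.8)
This theorem bundles two routine equivalences around a single substantive one, so the plan is to peel off the easy parts first. The equivalence ``$S$ integral $\iff X(G,S)$ integral'' drops out of Lemma~\ref{lem-2-3}: since $G$ is abelian, every irreducible representation has degree~$1$, so the multiset of eigenvalues of $X(G,S)$ is exactly $\{\chi(S):\chi\text{ a character of }G\}$, and $X(G,S)$ being integral means by definition that every $\chi(S)$ is a rational integer, i.e.\ that $S$ is integral. The equivalence ``$S\in B(G)\iff S$ is a union of atoms'' is immediate from the structure of $B(G)$ recalled above: every element of $B(G)$ is a disjoint union of atoms, and conversely each atom $[g]=\langle g\rangle\setminus\bigcup_{\langle h\rangle\subsetneq\langle g\rangle}\langle h\rangle$ lies in $B(G)$, which is closed under unions. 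Hence everything reduces to the single equivalence ``$S$ is integral $\iff S$ is a union of atoms of $B(G)$''.

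For ``union of atoms $\Rightarrow$ integral'', additivity of $\chi(\cdot)$ over disjoint unions reduces the task to showing that one atom $[g]$ is integral. With $d=|g|$ we have $[g]=\{g^k:1\le k\le d,\ \gcd(k,d)=1\}$, so $\chi([g])=\sum_{\gcd(k,d)=1}\chi(g)^k$ for any character $\chi$ of $G$. Since $\chi(g)$ is a $d$-th root of unity this is a Ramanujan sum, which is a rational integer; concretely it equals $\mu(d/e)\varphi(d)/\varphi(d/e)$ where $e$ is the order of $\chi(g)$ and $\mu,\varphi$ are the M\"obius and Euler functions. Thus $\chi([g])\in\mathbb{Z}$ and $[g]$ is integral.

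For the converse, assume $S$ is integral and let $n$ be the exponent of $G$, so every $\chi(g)$ lies in $\mathbb{Q}(\zeta_n)$. Fix $t$ with $\gcd(t,n)=1$ and let $\sigma_t$ be the automorphism of $\mathbb{Q}(\zeta_n)/\mathbb{Q}$ with $\sigma_t(\zeta_n)=\zeta_n^t$. As $\chi(S)\in\mathbb{Z}$ it is $\sigma_t$-fixed, and $\sigma_t(\chi(s))=\chi(s)^t=\chi(s^t)$; since $s\mapsto s^t$ is a bijection of $G$ (for $t$ is a unit mod $\exp(G)$), this gives $\chi(S)=\chi(S^{(t)})$ for every character $\chi$, where $S^{(t)}=\{s^t:s\in S\}$. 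The character table of the abelian group $G$ being a nonsingular square matrix, the vanishing of $\sum_{g}(\mathbf{1}_S(g)-\mathbf{1}_{S^{(t)}}(g))\chi(g)$ for all $\chi$ forces $S=S^{(t)}$. Hence $g\in S\Rightarrow g^t\in S$ whenever $\gcd(t,n)=1$. Finally, given $g\in S$ and $h$ with $\langle h\rangle=\langle g\rangle$, write $h=g^k$ with $\gcd(k,|g|)=1$ and use the Chinese Remainder Theorem to choose $t$ with $t\equiv k\pmod{|g|}$ and $t\equiv 1\pmod p$ for every prime $p\mid n$ with $p\nmid|g|$; then $\gcd(t,n)=1$ and $g^t=g^k=h$, so $h\in S$. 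Therefore $[g]\subseteq S$ for each $g\in S$, so $S$ is a union of atoms.

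Two steps deserve care. In the integrality step, one must cover the case where $\chi(g)$ is not a \emph{primitive} $d$-th root of unity, so that $\sum_{\gcd(k,d)=1}\chi(g)^k$ is a Ramanujan sum evaluated at a nonunit; this is handled either by its closed form or by grouping the terms according to the Galois orbit of $\chi(g)$ over $\mathbb{Q}$. The real obstacle, however, is the Chinese Remainder lift in the converse: integrality only supplies $S=S^{(t)}$ for $t$ coprime to the \emph{global} exponent $n$, whereas ``union of atoms'' is a condition about exponents coprime to the \emph{local} order $|g|$, and it is exactly the freedom to force $t\equiv 1$ at the remaining primes that reconciles the two. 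Making sure along the way that $s\mapsto s^t$ is a set bijection (so $\chi(S^{(t)})$ is a genuine character sum and the character-table argument applies) is the other point where one should not be sloppy.
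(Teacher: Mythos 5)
Your proof is correct. Note first that the paper does not prove this statement at all --- it is imported verbatim from Alperin--Peterson as a cited theorem --- so the only in-paper point of comparison is the proof of Lemma~\ref{lem-3-2}, the multiset generalization. There the authors run the same underlying Galois idea, but packaged linear-algebraically: they show that the $\mathbb{Q}$-span $W$ of the atom indicator vectors coincides with $V=\{v\in\mathbb{Q}^n\mid Mv\in\mathbb{Q}^n\}$ for the character matrix $M$, using an automorphism of $\mathbb{Q}(\zeta_m)$ to prove that any $v\in V$ is constant on atoms. Your argument for the hard direction is the same idea applied directly to the set $S$: integrality forces $\chi(S)=\chi(S^{(t)})$ for all $\chi$ and all $t$ coprime to $\exp(G)$, nonsingularity of the character table upgrades this to $S=S^{(t)}$, and the CRT lift from $(\mathbb{Z}/|g|)^{*}$ to $(\mathbb{Z}/n)^{*}$ then sweeps out a whole atom from each element. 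This is cleaner for genuine sets (no need for the span computation), though the paper's vector-space formulation is what generalizes painlessly to multisets with arbitrary multiplicities, which is what they actually need later. Your other three implications (eigenvalues of $X(G,S)$ for abelian $G$ are exactly the $\chi(S)$; atoms generate $B(G)$ under unions; Ramanujan sums are integers) are all handled correctly. One cosmetic slip: with $e$ the order of $\chi(g)$ and $d=|g|$, the Ramanujan sum equals $\mu(e)\varphi(d)/\varphi(e)$, not $\mu(d/e)\varphi(d)/\varphi(d/e)$; your formula is the one you would get if $e$ denoted $\gcd(s,d)$ instead. This does not affect the argument, since all you use is that the value is a rational integer.
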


However, the result of Lemma \ref{lem-3-1} is not true for non-abelian group, and is not true for dihedral group. Notice that the dihedral group $D_n$ is the semidirect product of cyclic group $C_n$ by $C_2$, i.e., $D_n=C_n\rtimes C_2$. In what follows we characterize integral $X(D_n,S)$ by using the Boolean algebra of the cyclic group $C_n$.

Let $S$ be a subset of $G$. A \emph{multi-set} based on $S$, denoted by $S^m$, is defined by a multiplicity function $m_S: S\rightarrow \mathbb{N}$, where $m_S(s)$ ($s\in S$) counts how many times $s$ appears in the multi-set. We set $m_S(s)=0$ for $s\not\in S$. The multi-set $S^m$ is called \emph{inverse-closed} if $m_S(s)=m_S(s^{-1})$ for each $s\in S$ and \emph{integral} if $\chi(S^m)=\sum_{s\in S}m_S(s)\chi(s)$ is an integer for  each character $\chi$ of $G$. Besides, if no disagreement would be aroused, we might write $\chi(S^m)=\sum_{s\in S^m}\chi(s)$ instead of $\chi(S^m)=\sum_{s\in S}m_S(s)\chi(s)$.

For  $S\in B(G)$, we know that $S$ is the union of some atoms, say $S=[g_1]\cup [g_2]\cup\cdots\cup [g_k]$. Denote by $S^{m_{g_1,g_2,\ldots,g_k}}$ the multi-set with multiplicity function $m_{g_1,g_2,\ldots,g_k}$, where $m_{g_1,g_2,\ldots,g_k}(s)=m_i\in \mathbb{N}$ for each $s\in [g_i]$ and $1\leq i\leq k$; that is, $S^{m_{g_1,g_2,\ldots,g_k}}=m_1*[g_1]\cup m_2*[g_2]\cup\cdots\cup m_k*[g_k]$.  We define $C(G)=\{S^{m_{g_1,g_2,\ldots,g_k}}\mid S=[g_1]\cup [g_2]\cup\cdots\cup [g_k]\in B(G), g_i\in G, k\in \mathbb{N}\}$ to be the collection of all multi-sets such as $S^{m_{g_1,g_2,\ldots,g_k}}$, which is called the \emph{integral cone} over $B(G)$. By Theorem \ref{lem-3-1}, $T$ is an integral set of the abelian group $G$ iff $T\in B(G)$. First we generalize  this result to integral multi-set $T^\mu$ by using the similar method.
\begin{lem}\label{lem-3-2}
Let $G$ be an abelian group, and $T^m$ a multi-subset of $G$. Then $T^m$ is integral if and only if $T^m\in C(G)$, where $C(G)$ is the integral cone over $B(G)$.
\end{lem}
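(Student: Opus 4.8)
The plan is to prove the two implications separately. The ``if'' direction is immediate from Theorem~\ref{lem-3-1}: if $T^m\in C(G)$ then $T^m=m_1*[g_1]\cup\cdots\cup m_k*[g_k]$ with each $[g_i]$ an atom of $B(G)$ and each $m_i\in\mathbb{N}$; since every atom lies in $B(G)$, it is an integral set by Theorem~\ref{lem-3-1}, so $\chi([g_i])\in\mathbb{Z}$ for every character $\chi$ of $G$, whence $\chi(T^m)=\sum_{i=1}^{k}m_i\,\chi([g_i])\in\mathbb{Z}$ and $T^m$ is integral. For the converse the idea is to exploit the Galois action on character values together with the linear independence of the irreducible characters of a finite abelian group, in the spirit of Alperin and Peterson's proof of Theorem~\ref{lem-3-1}.

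So assume $T^m$ is integral and put $N=|G|$, let $\zeta$ be a primitive $N$-th root of unity, and set $\Gamma=\mathrm{Gal}(\mathbb{Q}(\zeta)/\mathbb{Q})\cong(\mathbb{Z}/N\mathbb{Z})^{\times}$; for $t$ with $\gcd(t,N)=1$ let $\sigma_t\in\Gamma$ be the automorphism with $\sigma_t(\zeta)=\zeta^{t}$. Since each character value $\chi(g)$ is an $N$-th root of unity and $\chi$ is a homomorphism, $\sigma_t(\chi(g))=\chi(g)^{t}=\chi(g^{t})$, and as $\chi(T^m)\in\mathbb{Z}$ is fixed by $\sigma_t$ we obtain
$$\sum_{g\in G}m_T(g)\,\chi(g)=\sum_{g\in G}m_T(g)\,\chi(g^{t})=\sum_{g\in G}m_T(g^{t'})\,\chi(g),$$
where $tt'\equiv1\pmod N$ and the last step is the bijective substitution $g\mapsto g^{t'}$. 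Running this over all irreducible characters $\chi$ of $G$, which form a basis of the space of all functions $G\to\mathbb{C}$, forces $m_T(g)=m_T(g^{t'})$ for all $g\in G$ and all $t'$ coprime to $N$.

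Finally I would read off membership in $C(G)$. If $g$ has order $d$ then $d\mid N$ and $g^{t'}$ depends only on $t'\bmod d$; since the reduction $(\mathbb{Z}/N\mathbb{Z})^{\times}\to(\mathbb{Z}/d\mathbb{Z})^{\times}$ is surjective, $\{g^{t'}:\gcd(t',N)=1\}=\{x:\langle x\rangle=\langle g\rangle\}=[g]$. Hence $m_T$ is constant, say equal to $\mu_g\in\mathbb{N}$, on each atom $[g]$ of $B(G)$; choosing one representative $g_i$ from each atom on which $m_T$ is positive yields $T^m=\mu_{g_1}*[g_1]\cup\cdots\cup\mu_{g_k}*[g_k]$ with underlying set $[g_1]\cup\cdots\cup[g_k]\in B(G)$ (and the empty multi-set when no such atom exists), so $T^m\in C(G)$. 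I expect the main obstacle to be exactly this middle step: combining the Galois action, the linear independence of characters, and the surjectivity of $(\mathbb{Z}/N\mathbb{Z})^{\times}\to(\mathbb{Z}/d\mathbb{Z})^{\times}$ to conclude that the multiplicity function is constant precisely on the atoms; once this is in place, both inclusions are short.
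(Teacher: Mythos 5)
Your proof is correct and follows essentially the same route as the paper's: both directions rest on Theorem~\ref{lem-3-1} for atoms and, for the converse, on the Galois action of $\mathrm{Gal}(\mathbb{Q}(\zeta_N)/\mathbb{Q})$ on character values combined with the orthogonality (invertibility) of the character table to force the multiplicity function to be constant on each atom. The paper packages this as the equality $V=W$ between the rational span of the atom indicator vectors and the space of rational vectors with rational character transform, whereas you apply $\sigma_t$ directly to $\chi(T^m)$ and invoke linear independence of characters --- the same computation read in the opposite direction.
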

\begin{proof}
From Theorem \ref{lem-3-1}, each atom is an integral subset of $G$, thus $T^m$ is integral if $T^m\in C(G)$ by the definition of $C(G)$. Next we consider the necessity.

Suppose that $G=\{a_1,\ldots,a_n\}$, and $\chi_1,\ldots,\chi_n$ are irreducible characters of $G$. Consider the matrix $M=(m_{i,j})=(\chi_i(a_j))$. From the orthonormality of characters, the rows of $M$ are independent. The entries of $M$ belong to $Q(\zeta_m)$, where $\zeta_m$ is a primitive $m$-th root of unity and $m$ is the exponent of $G$. Consider the conjugate transpose $\bar{M}^T$, it is well known that $M\bar{M}^T=nI_n$ or $M^{-1}=\frac{1}{n}\bar{M}^T$. For a multi-subset $T^m$ of $G$, let $v_{T^m}$ be the vector with $m(a)$ in the location $a\in G$. Then $Mv_{[a]}=(\chi_1([a]),\ldots,\chi_n([1]))^T\in Z^n$ for each atom $[a]\in B(G)$ because $[a]$ is integral. Let $V=\{v\in Q^n\mid Mv\in Q^n\}$. Hence $W=\textrm{span}_Q\{v_{[a]}\mid \textrm{all atoms } [a]\}\subseteq V$. On the other hand, if $v\in V$ then $Mv=w\in Q^n$; hence $v=\frac{1}{n}\bar{M}^Tw$. Thus $v_i=\frac{1}{n}\sum_j\bar{\chi_j(a_i)}w_j\in Q^n$. Now we index the components of $v$ by the elements of $G$, that is $v_i=v(a_i)$. For any $b\in [a]=\{x\mid\langle x \rangle=\langle x \rangle\}$, there exists some $r$, satisfying $(r,|a|)=1$, such that $b=a^r$. Since the Galois group $Q(\zeta_m)$ over $Q$ acts transitively on roots of unity of the same order, there is an automorphism $\sigma$ taking $\chi_j(a)$ to $\chi_j(b)$ for every $j$.
Therefore, $v(a)=\sigma(v(a))=\frac{1}{n}\sum_j\sigma(\bar{\chi_j(a)})w_j=\frac{1}{n}\sum_j\bar{\chi_j(b)}w_j=v(b)$.
It follows that $v\in W$ and so $V\subseteq W$. Thus $W=V$.

Since the set of $v_{[a]}$, $[a]$ is an atom, is a linear independent set; hence it is also a basis for $W$. If $T^m$ is integral then $Mv_{T^m}=(\chi_1(T^m),\ldots,\chi_n(T^m))\in Z^n$ and so $v_{T^m}\in V=W$. Thus it is a linear combination of $v_{[a]}$, say $v_{T^m}=m_1v_{[a_1]}+m_2v_{[a_2]}+\cdots+m_kv_{[a_k]}$. Thus each element $t\in [a_i]$ ($1\le i\le k$) appears $m_i$ times in $T^m$, i.e., $T^m=m_1*[a_1]\cup m_2*[a_2]\cup\cdots\cup m_k*[a_k]$. It follows that $T^m\in C(G)$.
\end{proof}
\begin{remark} In fact, Matt et al. \cite{Matt} noticed this property when they consider the integral Cayley multigraphs.
\end{remark}
\begin{lem}\label{clm-1}
Let $U$ be a multi-set of integers satisfying $U=-U$, and let $n,h$ be two positive integers. Then
$\sum_{u\in U}\cos\frac{2hu\pi}{n}=\sum_{u\in U}e^{\frac{2hu\pi i}{n}}$.
\end{lem}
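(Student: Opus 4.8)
The plan is to pass to the complex exponential, separate real and imaginary parts, and show that the imaginary part vanishes by exploiting the symmetry $U=-U$. Concretely, for each $u$ write $e^{2hu\pi i/n}=\cos\frac{2hu\pi}{n}+i\sin\frac{2hu\pi}{n}$ and sum over the multi-set $U$ (each term counted with its multiplicity $m_U(u)$). This gives
$$\sum_{u\in U}e^{\frac{2hu\pi i}{n}}=\sum_{u\in U}\cos\frac{2hu\pi}{n}+i\sum_{u\in U}\sin\frac{2hu\pi}{n},$$
so it suffices to prove that $\sum_{u\in U}\sin\frac{2hu\pi}{n}=0$.

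For the latter I would use the hypothesis $U=-U$, which at the level of multi-sets means $m_U(u)=m_U(-u)$ for every integer $u$. Reindexing the sum by $u\mapsto -u$ is therefore legitimate and yields $\sum_{u\in U}\sin\frac{2hu\pi}{n}=\sum_{u\in U}\sin\frac{-2hu\pi}{n}=-\sum_{u\in U}\sin\frac{2hu\pi}{n}$, since sine is odd; hence the sum equals its own negative and is $0$. (A possible fixed point $u=0$ of $u\mapsto -u$ contributes $\sin 0=0$ and causes no difficulty.) Substituting back, the exponential sum collapses to its real part, which is exactly $\sum_{u\in U}\cos\frac{2hu\pi}{n}$, as required.

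There is essentially no obstacle here: the argument is a one-line oddness cancellation. The only points deserving a word of care are the interpretation of $U=-U$ in terms of multiplicities (so that the reindexing is valid for a multi-set rather than an ordinary set) and the harmless treatment of the self-paired element $u=0$. I expect this lemma to be invoked repeatedly in the next section with $U$ taken to be the multi-set of exponents attached to $S_1$, $S_1^2$, $S_2^2$, etc., so as to rewrite the trigonometric character values from Tables \ref{tab-1} and \ref{tab-2} as sums of $n$-th roots of unity, which can then be handled through the Boolean algebra of the cyclic group $C_n$ and Lemma \ref{lem-3-2}.
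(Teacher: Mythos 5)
Your proof is correct and is essentially the paper's argument in a slightly different guise: the paper averages the sum with its reindexed copy under $u\mapsto -u$ to get $e^{i\theta}+e^{-i\theta}=2\cos\theta$ termwise, while you apply the same reindexing to kill the imaginary part; both hinge on $m_U(u)=m_U(-u)$ making the substitution legitimate. No gap.
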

\begin{proof}
Since $U=-U$, we have $\sum_{u\in U}e^{\frac{2hu\pi i}{n}}=\sum_{u\in U}e^{\frac{2h(-u)\pi i}{n}}$. Therefore,
$$
\begin{array}{ll}
2\sum_{u\in U}e^{\frac{2hu\pi i}{n}}&
=\sum_{u\in U}e^{\frac{2hu\pi i}{n}}+\sum_{u\in U}e^{\frac{2h(-u)\pi i}{n}}\\
&=\sum_{u\in U}(e^{\frac{2hu\pi i}{n}}+e^{\frac{2h(-u)\pi i}{n}})\\
&=\sum_{u\in U}2\cos\frac{2hu\pi}{n}.
\end{array}
$$
This completes the proof.
\end{proof}

\begin{lem}\label{lem-3-3}
Let $D_n=\langle a,b\mid a^n=b^2=1,bab=a^{-1}\rangle$ be the dihedral group, and $T^m$ an inverse-closed multi-subset of $\langle a\rangle$. Let $\chi_h$ and $\phi_h$ be the irreducible characters of $D_n$ and  $\langle a\rangle$, respectively. We have
\vspace{-0.2cm}
\begin{enumerate}[(1)]
\item $\chi_{_h}(T^m)=2\phi_h(T^m)$ for $1\le h\le [\frac{n-1}{2}]$;
\vspace{-0.2cm}
\item $\chi_{_h}(T^m)$ are integers for all $1\le h\le [\frac{n-1}{2}]$ iff $\phi_h(T^m)$ are integers for all $0\le h\le n-1$.
    \vspace{-0.2cm}
\end{enumerate}
\end{lem}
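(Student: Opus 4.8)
The plan is to evaluate $\chi_{_h}(T^m)$ and $\phi_h(T^m)$ directly from the character values and to link them through Lemma \ref{clm-1}. Writing $T^m$ via its multiplicity function $m$, the character table of $D_n$ (Lemma \ref{lem-2-4}) gives $\chi_{_h}(T^m)=\sum_k m(a^k)\cdot 2\cos\frac{2kh\pi}{n}$, while (\ref{phi-1}) gives $\phi_h(T^m)=\sum_k m(a^k)e^{\frac{2kh\pi\mathbf{i}}{n}}$. For part (1), let $U$ be the multi-set of exponents of $a$ occurring in $T^m$ (each $k$ with multiplicity $m(a^k)$); since $T^m$ is inverse-closed we have $m(a^k)=m(a^{-k})$, so $U=-U$, and Lemma \ref{clm-1} yields $\sum_{u\in U}\cos\frac{2hu\pi}{n}=\sum_{u\in U}e^{\frac{2hu\pi\mathbf{i}}{n}}$. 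Doubling this identity gives exactly $\chi_{_h}(T^m)=2\phi_h(T^m)$ for $1\le h\le[\frac{n-1}{2}]$.

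For part (2) I would first record two elementary facts about $\phi_h(T^m)$: it is a $\mathbb{Z}$-linear combination of $n$-th roots of unity, hence an algebraic integer; and, again by Lemma \ref{clm-1}, $\phi_h(T^m)=\sum_k m(a^k)\cos\frac{2kh\pi}{n}$ is real, so $\overline{\phi_h(T^m)}=\phi_h(T^m)$. Because $\phi_{n-h}(a^k)=\overline{\phi_h(a^k)}$, this forces $\phi_{n-h}(T^m)=\phi_h(T^m)$, so it suffices to control $\phi_h(T^m)$ for $0\le h\le[\frac{n}{2}]$. The ``if'' direction is then immediate: if every $\phi_h(T^m)$ is an integer, then $\chi_{_h}(T^m)=2\phi_h(T^m)\in\mathbb{Z}$ for $1\le h\le[\frac{n-1}{2}]$ by part (1).

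For the ``only if'' direction, assume $\chi_{_h}(T^m)\in\mathbb{Z}$ for all $1\le h\le[\frac{n-1}{2}]$. For each such $h$, part (1) shows $\phi_h(T^m)=\tfrac12\chi_{_h}(T^m)\in\mathbb{Q}$; being also an algebraic integer, it lies in $\mathbb{Z}$. The remaining exponents correspond to the degree-one characters: $\phi_0(T^m)=\sum_k m(a^k)\in\mathbb{Z}$, and if $n$ is even $\phi_{n/2}(T^m)=\sum_k m(a^k)(-1)^k\in\mathbb{Z}$; finally, for $[\frac n2]<h\le n-1$ one has $1\le n-h\le[\frac{n-1}{2}]$, so $\phi_h(T^m)=\phi_{n-h}(T^m)\in\mathbb{Z}$ by the symmetry noted above. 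Hence $\phi_h(T^m)\in\mathbb{Z}$ for all $0\le h\le n-1$.

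I expect the only point needing care to be the bookkeeping of the range of $h$ in part (2): isolating the degree-one characters $\phi_0$ and (for even $n$) $\phi_{n/2}$, which are not of the form $\tfrac12\chi_{_h}$, and invoking the standard fact that a rational algebraic integer is a rational integer to upgrade ``$\phi_h(T^m)\in\mathbb{Q}$'' to ``$\phi_h(T^m)\in\mathbb{Z}$''. Everything else is a direct substitution from the character table together with Lemma \ref{clm-1}.
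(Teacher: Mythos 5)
Your proposal is correct and follows essentially the same route as the paper: part (1) via the exponent multi-set $U=-U$ and Lemma \ref{clm-1}, and part (2) via the symmetry $\phi_{n-h}(T^m)=\phi_h(T^m)$, the special cases $\phi_0$ and $\phi_{n/2}$, and the fact that a rational algebraic integer is a rational integer. The only cosmetic difference is that you derive the symmetry from reality plus complex conjugation while the paper computes it directly with cosines; the bookkeeping of the range of $h$ is handled correctly in both.
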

\begin{proof}
Since $T^m$ is inverse-closed, there exists a multi-set $U=-U$ of integers such that $T^m=\{a^u\mid u\in U\}$.
By Lemma \ref{clm-1}, we have
$$
\begin{array}{ll}
\chi_{h}(T^m)&=\sum_{u\in U}\chi_{h}(a^u)\\
&=\sum_{u\in U}2\cos\frac{2hu\pi}{n}\\
&=2\sum_{u\in U}e^{\frac{2hu\pi i}{n}}\\
&=2\sum_{u\in U}\phi_h(a^u)\\
&=2\phi_h(T^m).
\end{array}
$$
Thus (1) holds.

Notice that both $\chi_{h}(T^m)$ and $\phi_h(T^m)$ are algebraic integers, we claim that $\chi_{h}(T^m)$ is an integer iff $\phi_h(T^m)$ is an integer for each $1\le h\le [\frac{n-1}{2}]$.
Next we consider $\phi_{n-h}(T^m)$ for $1\le h\le [\frac{n-1}{2}]$. By Lemma \ref{clm-1}, we have $$\phi_{n-h}(T^m)=\sum_{u\in U}e^{\frac{2(n-h)u\pi i}{n}}=\sum_{u\in U}\cos\frac{2(n-h)u\pi}{n}=\sum_{u\in U}\cos\frac{2hu\pi}{n}=\sum_{u\in U}e^{\frac{2hu\pi i}{n}}=\phi_{h}(T^m).$$
Thus $\phi_{n-h}(T^m)$ is an integer iff $\phi_{h}(T^m)$ is an integer.
Also note that $\phi_0(T^m)$ and $\phi_{\frac{n}{2}}(T^m)$ (for even $n$) are always integers. It follows (2).
\end{proof}
By Lemmas \ref{lem-3-1} and \ref{lem-3-3}, we have the following result.
\begin{thm}\label{cor-3-1}
Let $D_n=\langle a,b\mid a^n=b^2=1,bab=a^{-1}\rangle$ be the dihedral group, and $T^m$ an inverse-closed multi-set with $T\subseteq\langle a\rangle\subseteq D_n$. Then $\chi_{_h}(T^m)$ are integers for all $1\le h\le [\frac{n-1}{2}]$ if and only if $T^{\mu}\in C(\langle a\rangle)$. In particular, $\chi_{_h}(T)$ are integers for all $1\le h\le [\frac{n-1}{2}]$ if and only if $T\in B(\langle a\rangle)$.
\end{thm}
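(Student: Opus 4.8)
The plan is to combine Lemma~\ref{lem-3-3} with the multi-set version of the Alperin--Peterson theorem, Lemma~\ref{lem-3-2}. The key point is that, since $T\subseteq\langle a\rangle$, the inverse-closed multi-set $T^m$ can be regarded as an inverse-closed multi-subset of the abelian group $C=\langle a\rangle$ itself, and over $C$ the property of being an \emph{integral} multi-set is exactly the requirement that $\phi_h(T^m)\in\mathbb{Z}$ for every irreducible character $\phi_h$ of $C$, i.e.\ for all $0\le h\le n-1$ in view of~(\ref{phi-1}).

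First I would apply Lemma~\ref{lem-3-3}(2): $\chi_{_h}(T^m)$ is an integer for all $1\le h\le[\frac{n-1}{2}]$ if and only if $\phi_h(T^m)$ is an integer for all $0\le h\le n-1$; by the remark above this is precisely the statement that $T^m$ is an integral multi-subset of $\langle a\rangle$. Then Lemma~\ref{lem-3-2}, applied to the abelian group $\langle a\rangle$, says that $T^m$ is integral if and only if $T^m\in C(\langle a\rangle)$. Chaining the two equivalences proves the first assertion. The easy direction also follows directly from the identity $\chi_{_h}(T^m)=2\phi_h(T^m)$ of Lemma~\ref{lem-3-3}(1) together with the integrality of the atoms of $B(\langle a\rangle)$.

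For the ``in particular'' clause I would take $m$ to be the all-ones multiplicity function, so that $T^m$ is the ordinary set $T$. Running the argument above gives $\chi_{_h}(T)\in\mathbb{Z}$ for all $h$ iff $T\in C(\langle a\rangle)$; but an element $m_1*[g_1]\cup\cdots\cup m_k*[g_k]$ of $C(\langle a\rangle)$ is an ordinary set exactly when each $m_i=1$, which forces $T=[g_1]\cup\cdots\cup[g_k]\in B(\langle a\rangle)$, and conversely $B(\langle a\rangle)\subseteq C(\langle a\rangle)$; alternatively one can quote Theorem~\ref{lem-3-1} once the characters have been translated through Lemma~\ref{lem-3-3}(2). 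I do not expect a genuine obstacle here: the only points needing care are that an inverse-closed multi-subset of $\langle a\rangle$ sitting inside $D_n$ corresponds to an inverse-closed multi-subset of the group $\langle a\rangle$ so that Lemma~\ref{lem-3-2} applies verbatim, and that the characters omitted from the range $1\le h\le[\frac{n-1}{2}]$---namely $\phi_0$ and, for even $n$, $\phi_{n/2}$---contribute integers automatically while $\phi_{n-h}(T^m)=\phi_h(T^m)$, both of which are already established inside the proof of Lemma~\ref{lem-3-3}.
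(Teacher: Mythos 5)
Your proposal is correct and follows essentially the same route as the paper, which derives the theorem directly by chaining Lemma~\ref{lem-3-3}(2) (translating integrality of the $\chi_{_h}(T^m)$ into integrality of all $\phi_h(T^m)$) with the multi-set Alperin--Peterson statement, Lemma~\ref{lem-3-2}, and with Theorem~\ref{lem-3-1} for the set case. Your extra care about the omitted characters $\phi_0$, $\phi_{n/2}$ and about reading off $T\in B(\langle a\rangle)$ from $T\in C(\langle a\rangle)$ when all multiplicities are $1$ is exactly what the paper leaves implicit.
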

Recall that the atom of $B(G)$ containing $g\in G$ has the form $[g]=\{x\mid \langle x\rangle=\langle g\rangle,x\in G\}$. Thus, for cyclic group $\langle a\rangle$ of order $n$,
the atom of $B(\langle a\rangle)$ containing $a^d\in \langle a\rangle$, where $d|n$, can be presented as
$[a^d]=\{a^l\mid (l,n)=d\}$, where $(l,n)$ stands for the  greatest common divisor of $l$ and $n$.

\begin{lem}\label{lem-3-4}
Let $D_n=\langle a,b\mid a^n=b^2=1,bab=a^{-1}\rangle$ be the dihedral group and $T\subseteq \langle a\rangle$. If $T\in B(\langle a\rangle)$, then $2\chi_{_h}(T^2)=(\chi_{_h}(T))^2$ for $1\le h\le[\frac{n-1}{2}]$.
\end{lem}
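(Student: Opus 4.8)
The plan is to reduce the whole identity to the one-dimensional characters $\phi_h$ of $\langle a\rangle$, where multiplicativity makes it transparent. First I would note that since $T\in B(\langle a\rangle)$, the set $T$ is a union of atoms of $B(\langle a\rangle)$, and each atom $[a^d]=\{a^l\mid (l,n)=d\}$ is inverse-closed (because $\langle x\rangle=\langle x^{-1}\rangle$); hence $T=T^{-1}$. Consequently the multi-set $T^2=\{t_1t_2\mid t_1,t_2\in T\}$ is also inverse-closed: in the abelian group $\langle a\rangle$ the map $(t_1,t_2)\mapsto(t_1^{-1},t_2^{-1})$ is a bijection of $T\times T$ carrying $t_1t_2$ to $(t_1t_2)^{-1}$.

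Next I would apply Lemma \ref{lem-3-3}(1) twice, once to $T$ and once to $T^2$, obtaining $\chi_{_h}(T)=2\phi_h(T)$ and $\chi_{_h}(T^2)=2\phi_h(T^2)$ for $1\le h\le[\frac{n-1}{2}]$. Then, since $\phi_h$ is a group homomorphism into $\mathbb{C}^{*}$, we have $\phi_h(t_1t_2)=\phi_h(t_1)\phi_h(t_2)$, so that
$$\phi_h(T^2)=\sum_{t_1,t_2\in T}\phi_h(t_1)\phi_h(t_2)=\Big(\sum_{t\in T}\phi_h(t)\Big)^2=(\phi_h(T))^2.$$
Combining the three displayed relations gives $\chi_{_h}(T^2)=2(\phi_h(T))^2=2\big(\tfrac12\chi_{_h}(T)\big)^2=\tfrac12(\chi_{_h}(T))^2$, i.e.\ $2\chi_{_h}(T^2)=(\chi_{_h}(T))^2$, as claimed.

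There is essentially no hard step; the only point requiring care is verifying the hypotheses of Lemma \ref{lem-3-3}, namely that both $T$ and $T^2$ are inverse-closed multi-subsets of $\langle a\rangle$ — and this is precisely where the assumption $T\in B(\langle a\rangle)$ (rather than an arbitrary subset of $\langle a\rangle$) enters. Alternatively one could bypass Lemma \ref{lem-3-3} and argue directly from $\chi_{_h}(a^k)=\phi_h(a^k)+\phi_{n-h}(a^k)$ together with $\phi_{n-h}(T)=\phi_h(T)$ for inverse-closed $T$, but routing through Lemma \ref{lem-3-3} keeps the bookkeeping cleanest.
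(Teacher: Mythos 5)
Your proof is correct and follows essentially the same route as the paper's: the paper likewise converts $\chi_{_h}$ on $T$ and on the sumset $T^2$ into the exponential sums (via Lemma \ref{clm-1}, which underlies Lemma \ref{lem-3-3}(1)), factors the double sum by multiplicativity of the exponential, and converts back, using exactly the inverse-closedness of $\Phi$ that you isolate. Your packaging through Lemma \ref{lem-3-3}(1) applied to both $T$ and the multi-set $T^2$ is a slightly tidier presentation of the identical argument.
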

\begin{proof}
Without loss of generality, suppose that $T=[a^{d_1}]\cup[a^{d_2}]\cup\cdots\cup[a^{d_k}]\subseteq\langle a\rangle$ with $d_i|n$ for $i=1,\ldots,k$. Let $\Phi_i=\{1\le l_i\le n\mid (l_i,n)=d_i\}$, we have $[a^{d_i}]=\{a^{l_i}\mid (l_i,n)=d_i\}=\{a^{l_i}\mid l_i\in\Phi_i\}$. Thus, by setting $\Phi=\cup_{i=1}^{i=k}\Phi_i$, we have $T=\{a^l\mid l\in \Phi\}$ and $T^2=\{a^{s+t}\mid s,t\in\Phi\}$. By Lemma \ref{clm-1}, we have
$$
\begin{array}{ll}
2\chi_{_h}(T^2)&=2\sum_{s,t\in \Phi}\chi_{_h}(a^{s+t})\\
&=2\sum_{s,t\in \Phi}2\cos\frac{2h(s+t)\pi}{n}\\
&=4\sum_{s,t\in \Phi}e^{\frac{2h(s+t)\pi}{n}\mathbf{i}}\\
&=4(\sum_{s\in\Phi}e^{\frac{2hs\pi}{n}\mathbf{i}})(\sum_{t\in\Phi}e^{\frac{2ht\pi}{n}\mathbf{i}})\\
&=4(\sum_{s\in\Phi}\cos\frac{2hs\pi}{n})(\sum_{t\in\Phi}\cos\frac{2ht\pi}{n})\\
&=(\sum_{s\in\Phi}2\cos\frac{2hs\pi}{n})(\sum_{t\in\Phi}2\cos\frac{2ht\pi}{n})\\
&=(\sum_{s\in\Phi}\chi_{_h}(a^{s}))(\sum_{t\in\Phi}\chi_{_h}(a^{t}))\\
&=(\chi_{_h}(T))^2.
\end{array}
$$
We complete the proof.
\end{proof}
From Theorem \ref{cor-3-1} and Lemma \ref{lem-3-4}, we get the following corollary immediately.
\begin{cor}\label{cor-3-2}
Let $D_n=\langle a,b\mid a^n=b^2=1,bab=a^{-1}\rangle$ be the dihedral group and $T\subseteq \langle a\rangle$. If $T\in B(\langle a\rangle)$, then $\chi_{_h}(T)$, $\chi_{_h}(T^2)$ are integers and $2\chi_{_h}(T^2)$ is a square number for all $1\le h\le [\frac{n-1}{2}]$.
\end{cor}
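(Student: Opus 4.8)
The plan is to read off all three assertions directly from Theorem \ref{cor-3-1} and Lemma \ref{lem-3-4}; the corollary is essentially bookkeeping, and the only point that needs care is to upgrade ``$2\chi_{_h}(T^2)$ is some nonnegative number'' to ``$\chi_{_h}(T^2)\in\mathbb{Z}$ and $2\chi_{_h}(T^2)$ is a perfect square.''

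First I would apply Theorem \ref{cor-3-1}: since $T\in B(\langle a\rangle)$, every $\chi_{_h}(T)$ is an integer for $1\le h\le[\frac{n-1}{2}]$. Moreover, writing $\chi_{_h}(T)=2\phi_h(T)$ as in Lemma \ref{lem-3-3}(1), and using that $\phi_h(T)$ is itself an integer (by the ``in particular'' clause of Theorem \ref{cor-3-1}, equivalently Lemma \ref{lem-3-3}(2) together with Theorem \ref{lem-3-1}), I conclude that each $\chi_{_h}(T)$ is in fact an \emph{even} integer with $\phi_h(T)\in\mathbb{Z}$.

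Then I would invoke Lemma \ref{lem-3-4}, namely $2\chi_{_h}(T^2)=(\chi_{_h}(T))^2$ for $1\le h\le[\frac{n-1}{2}]$. Since $\chi_{_h}(T)\in\mathbb{Z}$, the right-hand side is the square of an integer, so $2\chi_{_h}(T^2)$ is a square number; and $\chi_{_h}(T^2)=\tfrac12(\chi_{_h}(T))^2=\tfrac12\big(2\phi_h(T)\big)^2=2(\phi_h(T))^2$ is an integer because $\phi_h(T)$ is. This yields all three conclusions, and I anticipate no genuine obstacle: the only subtlety is keeping track of the factor $2$ coming from $\deg\chi_{_h}=2$, which is precisely what makes halving $(\chi_{_h}(T))^2$ still land in $\mathbb{Z}$.
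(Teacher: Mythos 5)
Your proof is correct and follows the same route the paper intends (the paper derives this corollary ``immediately'' from Theorem \ref{cor-3-1} and Lemma \ref{lem-3-4}, exactly the two results you invoke). Your extra observation that $\chi_{_h}(T)=2\phi_h(T)$ is even, which makes $\chi_{_h}(T^2)=\tfrac12(\chi_{_h}(T))^2$ visibly an integer, is a clean way to handle the one step the paper leaves implicit.
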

Using these preparations above, the result of Theorem \ref{thm-3-1} can be   simplified  as the following theorem.
\begin{thm}\label{thm-3-2}
Let $D_n=\langle a,b\mid a^n=b^2=1,bab=a^{-1}\rangle$ be the dihedral group of order $2n$, and let $S=S_1\cup S_2\subseteq D_n\setminus\{1\}$ be such that $S=S^{-1}$, where $S_1\subseteq\langle a\rangle$ and $S_2\subseteq b\langle a\rangle$. Then $X(D_n,S)$ is integral if and only if $S_1\in B(\langle a\rangle)$ and $2\chi_{_h}(S_2^2)$ is a square number for all $1\le h\le [\frac{n-1}{2}]$.
\end{thm}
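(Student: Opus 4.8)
The plan is to derive Theorem \ref{thm-3-2} directly from the criterion of Theorem \ref{thm-3-1} by simplifying its two conditions with the help of Theorem \ref{cor-3-1} and Lemma \ref{lem-3-4}. The key observation, which powers both implications, is that once $S_1\in B(\langle a\rangle)$ Lemma \ref{lem-3-4} gives $2\chi_{_h}(S_1^2)=(\chi_{_h}(S_1))^2$, and therefore the discriminant in Theorem \ref{thm-3-1}(ii) collapses:
$$\Delta_h(S)=2[\chi_{_h}(S_1^2)+\chi_{_h}(S_2^2)]-[\chi_{_h}(S_1)]^2=2\chi_{_h}(S_2^2),\quad 1\le h\le[\tfrac{n-1}{2}].$$
So condition (ii) becomes exactly "$2\chi_{_h}(S_2^2)$ is a square number", while condition (i) reduces to "$\chi_{_h}(S_1)$ are integers" (equivalently $S_1\in B(\langle a\rangle)$ by Theorem \ref{cor-3-1}) together with the integrality of $\chi_{_h}(S_2^2)$, which turns out to be a consequence of the square-number condition.

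Before the two directions I would record two routine preliminaries. Since $S=S^{-1}$ and the cosets $\langle a\rangle$, $b\langle a\rangle$ are disjoint and each closed under inversion, $S_1=S_1^{-1}$ and $S_2=S_2^{-1}$; in particular $S_1$ is an inverse-closed subset of $\langle a\rangle$, so Theorem \ref{cor-3-1} and Lemma \ref{lem-3-4} apply to it. Writing $S_2=\{ba^j\mid j\in J\}$, the identity $ba^j\cdot ba^k=a^{k-j}$ shows that $S_2^2$ is an inverse-closed multi-subset of $\langle a\rangle$, so by Lemma \ref{lem-3-3}(1) the quantity $\chi_{_h}(S_2^2)=2\phi_h(S_2^2)=2\big|\sum_{k\in J}e^{2hk\pi\mathbf{i}/n}\big|^2$ is a nonnegative real algebraic integer. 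Consequently, whenever $2\chi_{_h}(S_2^2)$ is a perfect square it is in particular a rational integer, and then $\chi_{_h}(S_2^2)\in\mathbb{Z}$.

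For the forward direction I would assume $X(D_n,S)$ is integral and apply Theorem \ref{thm-3-1} to obtain (i) and (ii). From (i) all $\chi_{_h}(S_1)$ are integers, so the ``in particular'' clause of Theorem \ref{cor-3-1} gives $S_1\in B(\langle a\rangle)$; Lemma \ref{lem-3-4} then yields $\Delta_h(S)=2\chi_{_h}(S_2^2)$ as displayed above, and (ii) says this is a square number. For the converse I would assume $S_1\in B(\langle a\rangle)$ and that $2\chi_{_h}(S_2^2)$ is a square number for every $h$. Corollary \ref{cor-3-2} (or Theorem \ref{cor-3-1} together with Lemma \ref{lem-3-4}) shows $\chi_{_h}(S_1)$ and $\chi_{_h}(S_1^2)$ are integers, the preliminary remark shows $\chi_{_h}(S_2^2)\in\mathbb{Z}$, so condition (i) of Theorem \ref{thm-3-1} holds; and again Lemma \ref{lem-3-4} collapses $\Delta_h(S)$ to $2\chi_{_h}(S_2^2)$, which is a square by hypothesis, so (ii) holds. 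Theorem \ref{thm-3-1} then gives integrality of $X(D_n,S)$.

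The only delicate point — and it is minor — is the bookkeeping around $\chi_{_h}(S_2^2)$: one must verify that it is an honest integer (not merely a half-integer) before feeding it into condition (i) of Theorem \ref{thm-3-1}, which is where the observation that $\chi_{_h}(S_2^2)$ is a rational algebraic integer as soon as $2\chi_{_h}(S_2^2)$ is a perfect square does its work. Everything else is a mechanical substitution into Theorem \ref{thm-3-1} via the quadratic identity of Lemma \ref{lem-3-4} and the Boolean-algebra characterization of Theorem \ref{cor-3-1}.
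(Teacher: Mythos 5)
Your proposal is correct and follows essentially the same route as the paper: reduce to Theorem \ref{thm-3-1}, use Theorem \ref{cor-3-1} to translate the integrality of $\chi_{_h}(S_1)$ into $S_1\in B(\langle a\rangle)$, collapse the discriminant via Lemma \ref{lem-3-4} to $\Delta_h(S)=2\chi_{_h}(S_2^2)$, and recover $\chi_{_h}(S_2^2)\in\mathbb{Z}$ from the square hypothesis because it is a rational algebraic integer. Your extra observation that $\chi_{_h}(S_2^2)=2\bigl|\sum_{k\in J}e^{2hk\pi\mathbf{i}/n}\bigr|^2$ is a nice (and correct) refinement, but the argument is otherwise the paper's own.
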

\begin{proof}
First suppose that $X(D_n,S)$ is integral. By Theorem \ref{thm-3-1}, $\chi_{_h}(S_1)$ is an integer for all $1\le h\le [\frac{n-1}{2}]$. By Theorem \ref{cor-3-1}, $S_1\in B(\langle a\rangle)$. Then, by Lemma \ref{lem-3-4}, we have $2\chi_{_h}(S_1^2)=(\chi_{_h}(S_1))^2$. Therefore,
$$\Delta_h(S)=2(\chi_{_h}(S_1^2)+\chi_{_h}(S_2^2))-(\chi_{_h}(S_1))^2=2\chi_{_h}(S_2^2).$$
Again by Theorem \ref{thm-3-1}, $\Delta_h(S)=2\chi_{_h}(S_2^2)$ is a square number for all $1\le h\le [\frac{n-1}{2}]$.

Conversely, suppose that $S_1\in B(\langle a\rangle)$ and $2\chi_{_h}(S_2^2)$ is a square number for all $1\le h\le [\frac{n-1}{2}]$. By Corollary \ref{cor-3-2}, both $\chi_{_h}(S_1)$ and $\chi_{_h}(S_1^2)$ are integers for each $h$.  Moreover, $\chi_{_h}(S_2^2)$ must be an integer because $2\chi_{_h}(S_2^2)$ is a square number and $\chi_{_h}(S_2^2)$ is an algebraic integer. Therefore, $\chi_{_h}(S_1^2)+\chi_{_h}(S_2^2)$ is an integer. Since $S_1\in B(\langle a\rangle)$ and $2\chi_{_h}(S_2^2)$ is a square number, by Lemma \ref{lem-3-4}, $\Delta_h(S)=2\chi_{_h}(S_2^2)$ is a square number. Thus $X(D_n,S)$ is integral by Theorem \ref{thm-3-1}.
\end{proof}
Theorem \ref{thm-3-2} gives a criterion to find integral $X(D_n,S)$, from which we get infinite classes of integral Cayley graphs over dihedral groups in the following corollary.
\begin{cor}\label{cor-3-3}
Let $D_n=\langle a,b\mid a^n=b^2=1,bab=a^{-1}\rangle$ be the dihedral group of order $2n$, and let $S=S_1\cup S_2\subseteq D_n\setminus\{1\}$ be such that $S=S^{-1}$, where $S_1\subseteq\langle a\rangle$ and $S_2\subseteq b\langle a\rangle$. If $S_1,bS_2\in B(\langle a\rangle)$, then $X(D_n,S)$ is integral.
\end{cor}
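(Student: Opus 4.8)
The plan is to reduce everything to Theorem \ref{thm-3-2}: since $S_1\in B(\langle a\rangle)$ is assumed, it suffices to prove that $2\chi_{_h}(S_2^2)$ is a square number for all $1\le h\le[\frac{n-1}{2}]$, and I will get this by identifying the multi-set $S_2^2$ with the ordinary square $(bS_2)^2$ inside $\langle a\rangle$ and then invoking Corollary \ref{cor-3-2} (equivalently Lemma \ref{lem-3-4}) for $T=bS_2\in B(\langle a\rangle)$.

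First I would record two elementary facts about $D_n$. From $bab=a^{-1}$ one gets $a^ib=ba^{-i}$ for all $i$, so every element of the coset $b\langle a\rangle$ is an involution; in particular $(ba^i)(ba^j)=b(a^ib)a^j=b(ba^{-i})a^j=a^{j-i}$. Second, writing $T=bS_2\subseteq\langle a\rangle$, the hypothesis $T\in B(\langle a\rangle)$ forces $T$ to be inverse-closed, because each atom $[a^d]=\{a^l\mid (l,n)=d\}$ is inverse-closed (as $(n-l,n)=(l,n)=d$) and $T$ is a union of atoms.

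Now the core step. Write $T=\{a^l\mid l\in\Psi\}$, so that $S_2=\{ba^l\mid l\in\Psi\}$, and hence, as multi-sets of $\langle a\rangle$, $S_2^2=\{a^{\,j-i}\mid i,j\in\Psi\}$ while $T^2=\{a^{\,i+j}\mid i,j\in\Psi\}$. Since $T$ is inverse-closed we have $\Psi=-\Psi\pmod n$, so the map $(i,j)\mapsto(-i,j)$ is a bijection of $\Psi\times\Psi$ onto itself sending the exponent $i+j$ to $j-i$; therefore $S_2^2=T^2$ as multi-sets. Consequently $\chi_{_h}(S_2^2)=\chi_{_h}(T^2)=\chi_{_h}\!\big((bS_2)^2\big)$ for every $h$. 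Because $bS_2\in B(\langle a\rangle)$, Corollary \ref{cor-3-2} gives that $2\chi_{_h}\!\big((bS_2)^2\big)$ is a square number for all $1\le h\le[\frac{n-1}{2}]$, hence so is $2\chi_{_h}(S_2^2)$. Together with $S_1\in B(\langle a\rangle)$, Theorem \ref{thm-3-2} yields that $X(D_n,S)$ is integral.

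The main obstacle is the multi-set identity $S_2^2=(bS_2)^2$: one must notice that $b\langle a\rangle$ consists of involutions, so that a product of two elements of $S_2$ returns to $\langle a\rangle$ in the ``subtractive'' form $a^{\,j-i}$, and then use that membership in the Boolean algebra forces $bS_2$ to be inverse-closed in order to rewrite this as the genuine square $T^2$ handled by Lemma \ref{lem-3-4}/Corollary \ref{cor-3-2}. Once this identification is in place, the remaining steps are bookkeeping and an appeal to the already established Theorem \ref{thm-3-2}.
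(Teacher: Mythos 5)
Your proof is correct and follows essentially the same route as the paper: both reduce to Theorem \ref{thm-3-2}, establish the multi-set identity $S_2^2=(bS_2)^2$ via the inverse-closedness of the union of atoms $bS_2$ (i.e.\ $\Psi=-\Psi \pmod n$), and then apply Lemma \ref{lem-3-4}/Corollary \ref{cor-3-2} to conclude that $2\chi_{_h}(S_2^2)=\chi_{_h}^2(bS_2)$ is a square. Your write-up is if anything slightly more explicit about why $(ba^i)(ba^j)=a^{j-i}$ and why $bS_2$ is inverse-closed, but the argument is the same.
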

\begin{proof}
By Theorem \ref{thm-3-2}, it suffices to show that $2\chi_{_h}(S_2^2)$ is a square number for all $1\le h\le [\frac{n-1}{2}]$. Since $bS_2\in B(\langle a\rangle)$, $bS_2$ and $S_2$ can be written as
$$bS_2=[a^{d_1}]\cup[a^{d_2}]\cup\cdots\cup[a^{d_k}]\textrm{ and }S_2=b[a^{d_1}]\cup b[a^{d_2}]\cup\cdots\cup b[a^{d_k}],$$
for some $d_i|n$ where $i=1,\ldots,k$.
Therefore,
$$(bS_2)^2=\{a^{l_1+l_2}\mid l_1,l_2\in\Phi\}\textrm{ and }S_2^2=\{a^{l_1-l_2}\mid l_1,l_2\in\Phi\},$$
where $\Phi=\{\ell\mid (\ell,n)\in\{d_1,\ldots,d_k\}\}$.
Note that $\Phi=-\Phi$ $(\mathrm{mod}~n)$, we have $S_2^2=(bS_2)^2$.
Since $bS_2\in B(\langle a\rangle)$, by Lemma \ref{lem-3-4},
$$2\chi_{_h}(S_2^2)=2\chi_{_h}((bS_2)^2)=\chi_{_h}^2(bS_2),$$
is a square number.
\end{proof}
Corollary \ref{cor-3-3} gives an explicit condition for the integrality of $X(D_n,S)$. However, this sufficient condition is not necessary. We will give a counterexample in Example \ref{exa-6}. For this purpose, we need to introduce the famous Remanujan sum. Let $s\ge0,n\ge1$ be two integers, the \emph{Remanujan sum} is defined by $c(s,n)=\sum_{(k,n)=1}e^{\frac{2sk\pi }{n}\mathbf{i}}$, which is  known as an integer (see \cite{George} for reference):
\begin{equation}\label{eq-7}
c(s,n)=\frac{\varphi(n)}{\varphi(\frac{n}{(s,n)})}\mu(\frac{n}{(s,n)})
\end{equation}
where $\varphi(\cdot)$ and $\mu(\cdot)$ are the Euler's totient function and  M\"{o}bius function respectively. The \emph{Euler's totient function} $\varphi(n)$ of a positive integer $n$ is defined as the number of positive integers less than and relatively prime to $n$. The \emph{M\"{o}bius function} $\mu(n)$ of a positive integer $n$ is defined by
$$
\mu(n)=\left\{
\begin{array}{lll}
0&\mbox{if $n$ contains some square divisor;}\\
(-1)^k&\mbox{if $n=p_1\cdots p_k$, where $p_i$ is prime;}\\
1&\mbox{if $n=1$.}
\end{array}
\right.
$$
Therefore, it is easy to see that
$c(s,n)=\mu(n)$ if $(s,n)=1$ and $c(s,n)=\varphi(n)$ if $(s,n)=n$.

\begin{exa}\label{exa-6}
\emph{Let $D_{3k}=\langle a,b\mid a^{3k}=b^2=1,bab=a^{-1}\rangle$ be the dihedral group, and $S=S_1\cup S_2$, where $S_1=[a]$ and $S_2=\{b,ba^{k}\}$. Clearly, $X(D_{3k},S)$ is connected. By simple computation, $S_2^2=2*[1]\cup [a^k]$
which is a multi-subset with multiplicity function: $m_{S_2^2}(x)=2$ if $x\in[1]=[a^n]=\{1\}$, and $m_{S_2^2}(x)=1$ if $x\in [a^k]=\{a^{k},a^{2k}\}$.
Therefore, by Lemma \ref{lem-3-2}, $S_2^2\in C(\langle a\rangle)$ is integral. Moreover, by Lemma \ref{lem-3-3} (1), we have $2\chi_{_h}(S_2^2)=4\phi_h(S_2^2)$. Therefore, $$2\chi_{_h}(S_2^2)=4\phi_h(S_2^2)=4(2e^{\frac{2hn\pi}{n}\mathbf{i}}+\sum_{(l,n)=k}e^{\frac{2hl\pi}{n}\mathbf{i}})=4(2+\sum_{(s,3)=1}e^{\frac{2hs\pi}{3}\mathbf{i}})=4(2+c(h,3)).
$$
If $3\nmid h$, then $(h,3)=1$ and so $c(h,3)=\mu(3)=-1$. It means that $2\chi_{_h}=4(2-1)=4$. If $3|h$, then $(h,3)=3$ and so $c(h,3)=\varphi(3)=2$. It means that $2\chi_{_h}=4(2+2)=16$. Thus $2\chi_{_h}(S_2^2)$ is a square number for all $1\le h\le[\frac{3k}{2}]$. Moreover, $S_1=[a]\in B(\langle a\rangle)$. By Theorem \ref{thm-3-2}, $X(D_{3k},S)$ is integral. However, $bS_2=\{1,a^k\}\not\in B(\langle a\rangle)$.}
\end{exa}
Next we present a necessary condition for the integrality of $X(D_n,S)$.
\begin{cor}\label{cor-3-4}
Let $D_n=\langle a,b\mid a^n=b^2=1,bab=a^{-1}\rangle$ be the dihedral group, and let $S=S_1\cup S_2\subseteq D_n\setminus\{1\}$ be such that $S=S^{-1}$, where $S_1\subseteq\langle a\rangle$ and $S_2\subseteq b\langle a\rangle$. If $X(D_n,S)$ is integral, then $S_1\in B(\langle a\rangle)$ and $S_2^2\in C(\langle a\rangle)$.
\end{cor}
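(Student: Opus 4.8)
The plan is to read this off almost directly from Theorem~\ref{thm-3-2}, which already packages integrality of $X(D_n,S)$ into the two conditions ``$S_1\in B(\langle a\rangle)$'' and ``$2\chi_{_h}(S_2^2)$ is a square number for all $1\le h\le[\frac{n-1}{2}]$''. The first of these is exactly the first half of what we want, so the only real content is the implication that the square-number condition forces $S_2^2\in C(\langle a\rangle)$. For that I would route through Theorem~\ref{cor-3-1}, which tells us that an inverse-closed multi-subset $T^m$ of $\langle a\rangle$ lies in $C(\langle a\rangle)$ precisely when $\chi_{_h}(T^m)\in\mathbb{Z}$ for every $1\le h\le[\frac{n-1}{2}]$.

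First I would verify the hypotheses needed to apply Theorem~\ref{cor-3-1} to $S_2^2$. Since $S_2\subseteq b\langle a\rangle$ and $(ba^k)(ba^l)=a^{l-k}\in\langle a\rangle$ (using $bab=a^{-1}$), the multi-set $S_2^2$ is a multi-subset of $\langle a\rangle$. It is inverse-closed because the transposition $(s_1,s_2)\mapsto(s_2,s_1)$ is a bijection of $S_2\times S_2$ onto itself carrying the product $s_1s_2$ to $(s_1s_2)^{-1}=s_2s_1$; hence every element and its inverse occur with equal multiplicity in $S_2^2$.

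Next I would upgrade the square-number condition to an integrality statement. By Theorem~\ref{thm-3-2}, integrality of $X(D_n,S)$ makes $2\chi_{_h}(S_2^2)$ a square number, in particular an integer, for all $1\le h\le[\frac{n-1}{2}]$; consequently $\chi_{_h}(S_2^2)\in\mathbb{Q}$. On the other hand $\chi_{_h}(S_2^2)$ is a sum of values of the form $2\cos(\tfrac{2h\ell\pi}{n})$, hence an algebraic integer, so a rational value must be a rational integer, i.e.\ $\chi_{_h}(S_2^2)\in\mathbb{Z}$ for every such $h$. (Alternatively one may use $\chi_{_h}(S_2^2)=2\phi_h(S_2^2)$ from Lemma~\ref{lem-3-3}(1) and the fact that $4\phi_h(S_2^2)$ is an integer together with Lemma~\ref{lem-3-2}.)

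Finally, applying Theorem~\ref{cor-3-1} to the inverse-closed multi-subset $S_2^2\subseteq\langle a\rangle$ yields $S_2^2\in C(\langle a\rangle)$, and combined with $S_1\in B(\langle a\rangle)$ from Theorem~\ref{thm-3-2} this completes the proof. I do not expect a genuine obstacle here; the only steps requiring a sentence of care are checking that $S_2^2$ really is an inverse-closed multiset lying inside $\langle a\rangle$ (so that Theorem~\ref{cor-3-1} is applicable) and the algebraic-integer argument that promotes ``$2\chi_{_h}(S_2^2)$ is a square'' to ``$\chi_{_h}(S_2^2)\in\mathbb{Z}$''. Everything else is a direct appeal to Theorems~\ref{thm-3-2} and \ref{cor-3-1}.
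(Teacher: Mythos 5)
Your proposal is correct and follows essentially the same route as the paper: invoke Theorem~\ref{thm-3-2} to get $S_1\in B(\langle a\rangle)$ and the square-number condition, promote ``$2\chi_{_h}(S_2^2)$ is a square'' to ``$\chi_{_h}(S_2^2)\in\mathbb{Z}$'' via the algebraic-integer argument, and then apply Theorem~\ref{cor-3-1}. Your explicit check that $S_2^2$ is an inverse-closed multi-subset of $\langle a\rangle$ is a small point of added care that the paper leaves implicit.
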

\begin{proof}
If $X(D_n,S)$ is integral,  then $S_1\in B(\langle a\rangle)$ and $2\chi_{_h}(S_2^2)$ is a square number for all $1\le h\le [\frac{n-1}{2}]$ by Theorem \ref{thm-3-2}, and so $\chi_{_h}(S_2^2)$ must be a rational number. Thus we claim that $\chi_{_h}(S_2^2)\in \mathbb{Z}$ because $\chi_{_h}(S_2^2)$ is an algebraic integer. By Theorem  \ref{cor-3-1}, we get $S_2^2\in C(\langle a\rangle)$.
\end{proof}

Unfortunately, the necessary condition given in Corollary \ref{cor-3-4} is not sufficient yet. We present a counterexample below.
\begin{exa}\label{exa-4}
\emph{Let $D_7=\langle a,b\mid a^7=b^2=1,bab=a^{-1}\rangle$ be the dihedral group of order $14$ and $S=\{ba,ba^2,ba^4\}$. It is clear that  $X(D_7,S)$ is connected. By simple computation, we have
$$S^2=\{1,1,1,a,a^2,a^3,a^4,a^5,a^6\}=3*[1]\cup [a]\in C(\langle a\rangle).$$
Therefore, by Lemma \ref{lem-3-2}, $S_2^2\in C(\langle a\rangle)$ is integral. Moreover, by Lemma \ref{lem-3-3} (1), we have $2\chi_{_h}(S_2^2)=4\phi_h(S_2^2)$. Therefore $2\chi_{_h}(S_2^2)=4\phi_h(S_2^2)=4(3+\sum_{k=1}^6e^{\frac{2hk\pi}{7}\mathbf{i}})=4(3+c(h,7)).$
If $7\nmid h$, then $(h,7)=1$ and so $c(h,7)=\mu(7)=-1$. It means that $2\chi_{_h}(S_2^2)=4(3-1)=8$, which is not a square number. By Theorem \ref{thm-3-2}, $X(D_7,S)$ is not integral.}
\end{exa}
Although the integral Caylay graphs over dihedral group are completely characterized by Theorem \ref{thm-3-2}, it seems difficult to evidently give all integral $X(D_n,S)$. In the next section we will determine all integral $X(D_n,S)$ for $n$ being a prime.
\section{Integral Cayley graphs over $D_p$}\label{s-4}
Let $D_n=\langle a,b\mid a^n=b^2=1,bab=a^{-1}\rangle$ be the dihedral group of order $2n$, and let $S=S_1\cup S_2\subseteq D_n\setminus\{1\}$ be such that $S=S^{-1}$, where $S_1\subseteq\langle a\rangle$ and $S_2\subseteq b\langle a\rangle$. We have known that $S_1\in B(\langle a\rangle)$ and $S_2^2\in C(\langle a\rangle)$ if $X(D_n,S)$ is integral by Corollary \ref{cor-3-4}. This implies that $S_2^2$ has the form
\begin{equation}\label{square-0}
S_2^2=m_1*[a^{d_1}]\cup m_2*[a^{d_2}]\cup\cdots\cup m_k*[a^{d_k}],
\end{equation}
for some $d_i|n$ where $i=1,\ldots,k$. The multiplicity function $m_{S_2^2}$ of the multi-set $S_2^2$ is given by $m_{S_2^2}(x)=m_i$ for $x\in [a^{d_i}]$. We say that $S_2^2$ is \emph{$k$-integral} if $S_2^2$ has the form of (\ref{square-0}) with $m_i\not=0$ for $1\leq i\leq k$. Clearly, $S_2^2$ always contains $1$, and if $S_2$ contains two distinct elements then $S_2^2$ will contain an element different from $1$. Taking $k=1$ in (\ref{square-0}), then $S_2$ contains only one element and so $S_2^2=\{1\}$, thus $2\chi_{_h}(S_2^2)=4$ is a square number. Then we have the following result for the $1$-integral $S_2^2$.
\begin{lem}\label{lem-4-1}
Let $D_n=\langle a,b\mid a^n=b^2=1,bab=a^{-1}\rangle$ and $S=S_1\cup S_2\subseteq D_n\setminus\{1\}$ such that $S=S^{-1}$, where $S_1\subseteq\langle a\rangle$ and $S_2\subseteq b\langle a\rangle$. If $S_1\in B(\langle a\rangle)$ and $S_2=\{ba^i\}$ for any $0\le i\le n-1$, then $X(D_n,S)$ is integral.
\end{lem}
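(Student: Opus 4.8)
The plan is to invoke Theorem \ref{thm-3-2} directly. That theorem states that $X(D_n,S)$ is integral if and only if $S_1\in B(\langle a\rangle)$ and $2\chi_{_h}(S_2^2)$ is a square number for every $1\le h\le[\frac{n-1}{2}]$. The first condition is exactly part of the hypothesis, so the whole task reduces to checking the second condition, which in turn reduces to identifying the multi-set $S_2^2$.

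First I would verify that the prescribed $S$ is a legitimate connection set, i.e. $S=S^{-1}$, so that Theorem \ref{thm-3-2} applies. Since $S_1\in B(\langle a\rangle)$, it is a union of atoms $[a^d]$, each of which is inverse-closed (if $\langle x\rangle=\langle a^d\rangle$ then $\langle x^{-1}\rangle=\langle a^d\rangle$), hence $S_1=S_1^{-1}$. For $S_2=\{ba^i\}$, the defining relation gives $ba^i=a^{-i}b$, whence $(ba^i)^{-1}=(a^{-i}b)^{-1}=b^{-1}a^{i}=ba^i$; so $ba^i$ is an involution and $S_2=S_2^{-1}$. Therefore $S=S_1\cup S_2$ is inverse-closed.

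Next I would compute $S_2^2$. Using $a^i b=ba^{-i}$ again, $(ba^i)^2=ba^i\cdot ba^i=b(a^ib)a^i=b(ba^{-i})a^i=b^2a^{-i}a^i=1$, so $S_2^2=\{1\}$ (as a multi-set with a single element of multiplicity one). Consequently $\chi_{_h}(S_2^2)=\chi_{_h}(1)=2$ for every $h$, and hence $2\chi_{_h}(S_2^2)=4=2^2$ is a square number for all $1\le h\le[\frac{n-1}{2}]$. Together with $S_1\in B(\langle a\rangle)$, Theorem \ref{thm-3-2} yields that $X(D_n,S)$ is integral.

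There is essentially no obstacle here: the statement is an immediate specialization of Theorem \ref{thm-3-2} to the degenerate case $|S_2|=1$, which was already flagged in the discussion preceding the lemma. The only points requiring (routine) care are the automatic inverse-closure of $S$ and the dihedral identity $(ba^i)^2=1$.
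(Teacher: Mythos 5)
Your proposal is correct and matches the paper's own justification, which appears in the paragraph immediately preceding the lemma: $S_2^2=\{1\}$, so $2\chi_{_h}(S_2^2)=4$ is a square, and Theorem \ref{thm-3-2} concludes. The extra verification that $S=S^{-1}$ is harmless but unnecessary, since it is already a hypothesis of the lemma.
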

In what follows, we focus on $2$-integral $S_2^2$, i.e., $S_2^2=m_1*[a^{d_1}]\cup m_2*[a^{d_2}]$. Since $S_2\subseteq b\langle a\rangle$, there exists $U\subset Z_n$ such that $S_2=\{ba^i\mid i\in U\}$. Then $S_2^2=\{a^{u_1-u_2}\mid u_1,u_2\in U\}$ is a multi-set containing $t*\{1\}$, where $t=|U|=|S_2|$. Thus, without loss of generality, we always assume that  $m_1=t$ and $d_1=n$. By Lemma \ref{lem-3-3} (1),  $\chi_{_h}(S_2^2)=2\phi_h(S_2^2)$ for $1\le h\le [\frac{n-1}{2}]$. Thus $2\chi_{_h}(S_2^2)$ is a square number if and only if $\phi_h(S_2^2)$ is a square number. The following result can determine if $\phi_h(S_2^2)$ is a square number for the $2$-integral $S_2^2$.
\begin{lem}\label{lem-4-2}
Let $n=p_1^{\alpha_1}p_2^{\alpha_2}\cdots p_r^{\alpha_r}$ ($p_i\ge3$, $r\ge 1$) be the prime factorization of $n$ and $S_2\subseteq b\langle a\rangle\subseteq D_n=\langle a,b\mid a^n=b^2=1,bab=a^{-1}\rangle$. If $S_2^2=t*[1]+m_2*[a^{d_2}]$ (implying that $|S_2|=t$) is $2$-integral,
then $\phi_h(S_2^2)$ is a square number for all $1\le h\le[\frac{n-1}{2}]$ if and only if $\frac{n}{d_2}=p_i$ for some $i$ ($1\le i\le r$), and $t=p_i-1$ or $p_i$.
\end{lem}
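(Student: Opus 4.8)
The plan is to reduce the statement to a question about Ramanujan sums and then to an elementary Diophantine identity. First I would set $e=n/d_2$; since $e\mid n$, every prime divisor of $e$ is one of $p_1,\dots,p_r$, and $e$ is odd. Writing $[a^{d_2}]=\{a^{d_2 m}:1\le m\le e,\ (m,e)=1\}$ and using $\phi_h(a^l)=e^{\frac{2hl\pi}{n}\mathbf{i}}$, one gets
\[\phi_h(S_2^2)=t+m_2\sum_{(m,e)=1}e^{\frac{2hm\pi}{e}\mathbf{i}}=t+m_2\,c(h,e),\]
so (\ref{eq-7}) applies. Counting the $t^2$ elements of the multi-set $S_2^2$ ($t$ copies of $1$ and $m_2$ copies of each of the $\varphi(e)$ elements of $[a^{d_2}]$) yields the key identity $m_2=t(t-1)/\varphi(e)$; in particular $\varphi(e)\mid t(t-1)$ and $m_2\ge1$. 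The only evaluations I will use are $h=1$, giving $\phi_1(S_2^2)=t+m_2\mu(e)$, and $h=e/q$ for a prime $q\mid e$, giving $\phi_{e/q}(S_2^2)=t-m_2\varphi(e)/(q-1)=t(q-t)/(q-1)$; both indices lie in $\{1,\dots,[\frac{n-1}{2}]\}$ because $e/q\le n/3\le\frac{n-1}{2}$.

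For the sufficiency, suppose $e=p_i$ is prime and $t\in\{p_i-1,p_i\}$. Then $c(h,e)\in\{-1,\varphi(p_i)\}$, so $\phi_h(S_2^2)$ equals either $t+m_2(p_i-1)=t+t(t-1)=t^2$ or $t-m_2=t(p_i-t)/(p_i-1)$, which is $0$ if $t=p_i$ and $1$ if $t=p_i-1$; all of these are perfect squares, as required.

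For the necessity I first show $e$ must be prime. If $e$ were composite with least prime divisor $q$, then $\phi_{e/q}(S_2^2)=t(q-t)/(q-1)$ being a nonnegative square forces $t\le q$, so $\varphi(e)\le t(t-1)\le q(q-1)$; but a composite integer with least prime $q$ satisfies $\varphi(e)\ge q(q-1)$, with equality only when $e=q^2$. Hence $e=q^2$, $t=q$, $m_2=1$, and then $\phi_1(S_2^2)=t+m_2\mu(q^2)=q$, which is not a square — a contradiction. So $e$ is a prime, hence some $p_i$; write $q=p_i$. Now $\phi_1(S_2^2)=t-m_2=t(q-t)/(q-1)$ must be a nonnegative square, so $t\le q$; the cases $t=q$ and $t=q-1$ give the squares $0$ and $1$, and it remains to exclude $2\le t\le q-2$.

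This last elimination is the crux. Since $q$ is prime, $\gcd(t,q-t)=\gcd(t,q)=1$, and $t(q-t)=t(q-1)-t(t-1)$ shows $(q-1)\mid t(q-t)$; so I can factor $q-1=c_1c_2$ with $c_1\mid t$, $c_2\mid(q-t)$ and $\gcd(c_1,c_2)=1$. Then $t/c_1$ and $(q-t)/c_2$ are coprime with product the perfect square $t(q-t)/(q-1)$, so $t=c_1a^2$ and $q-t=c_2b^2$ for some $a,b\ge1$, whence $q=c_1a^2+c_2b^2$ with $c_1c_2=q-1$. Multiplying by $c_1$ gives $q(c_1-b^2)=(c_1a-b)(c_1a+b)$. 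The bounds $(c_1a)^2=c_1t\le(q-1)(q-2)<q^2$ and $b^2\le q-t<q$ give $0<c_1a<q$ and $0<b<q$, hence $|c_1a-b|<q$ and $0<c_1a+b<2q$; since $q$ is prime this forces either $c_1a=b$ (then $c_1=b^2$, so $ab=1$ and $t=1$) or $c_1a+b=q$ (then $c_1(1-a)=b(b-1)$, and comparing signs gives $a=b=1$, so $t=q-1$). Either way $t\notin[2,q-2]$, a contradiction, which completes the proof.
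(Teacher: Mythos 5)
Your proof is correct, and its skeleton — the Ramanujan-sum evaluation $\phi_h(S_2^2)=t+m_2c(h,e)$, the counting identity $m_2\varphi(e)=t(t-1)$, and testing $h$ with $e/(h,e)$ equal to $1$ or a prime — is exactly the paper's. Where you genuinely diverge is in the necessity. The paper splits into three cases according to whether $n_2=n/d_2$ is prime, divisible by $p_i^2$, or divisible by two distinct primes, and handles the latter two by extracting incompatible values of $t$ from different choices of $h$; you collapse both composite cases into one stroke via the least prime divisor $q$ of $e$, the chain $\varphi(e)\le t(t-1)\le q(q-1)$ forced by $t\le q$, and the elementary bound $\varphi(e)\ge q(q-1)$ for odd composite $e$ (equality only at $e=q^2$, which you then kill with $\mu(q^2)=0$). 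This is cleaner and, unlike the paper's Case 2, never needs the auxiliary observation that $t$ itself must be a square. For the prime case, the paper disposes of $2\le t\le q-2$ by a rather terse divisibility remark about $p_i=\frac{(t+w_h)(t-w_h)}{t-w_h^2}$; your coprime factorization $q-1=c_1c_2$ with $t=c_1a^2$, $q-t=c_2b^2$ and the identity $q(c_1-b^2)=(c_1a-b)(c_1a+b)$ is a fully rigorous replacement and is the most substantial added value of your write-up. Two cosmetic points: in your Case B the conclusion is $t=c_1\in\{1,q-1\}$ rather than only $t=q-1$ (harmless, since $t=1$ is also excluded), and your inequality $\varphi(e)\ge q(q-1)$ does rely on $e$ being odd (so that a second prime factor is at least $q+2$) — you noted this, but it is worth keeping explicit.
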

\begin{proof}
Let $n_2=\frac{n}{d_2}$, according to (\ref{phi-1}) and (\ref{eq-7}) we have
\begin{eqnarray}\label{square-1}
\phi_h(S_2^2)&=&\sum_{x\in S_2^2}\phi_h(x)
=t\phi_h(1)+\sum_{x\in [a^{d_2}]}m_2\phi_h(x)\nonumber\\
&=&t+m_2\sum_{(l,n)=d_2}e^{\frac{2hl\pi}{n}\mathbf{i}}=t+m_2\sum_{(l/d_2,n_2)=1}e^{\frac{2hl/d_2\pi}{n_2}\mathbf{i}}\nonumber\\
&=&t+m_2c(h,n_2)=t+m_2\frac{\varphi(n_2)}{\varphi(\frac{n_2}{(h,n_2)})}\mu(\frac{n_2}{(h,n_2)})
\end{eqnarray}
By counting the number of elements of $S_2^2$, we have
\begin{equation}\label{square-3}
t+m_2\varphi(n_2)=t^2.
\end{equation}
From (\ref{square-1}) and (\ref{square-3}) we have
\begin{equation}\label{square-4-1}
\phi_h(S_2^2)=t+\frac{t(t-1)}{\varphi(\frac{n_2}{(h,n_2)})}\mu(\frac{n_2}{(h,n_2)}).
\end{equation}

Now we consider the sufficiency. Suppose that there exists some $i$ such that $n_2=p_i$, and $t=p_i-1$ or $p_i$. For any $h$ satisfying $(h,n_2)=1$, from (\ref{square-4-1}) we have
\begin{equation}\label{square-4}
\phi_h(S_2^2)=\frac{t(p_i-t)}{p_i-1}.
\end{equation}
Thus $\phi_h(S_2^2)=1$ if $t=p_i-1$, and $\phi_h(S_2^2)=0$  if $t=p_i$. Additionally, for any $h$ satisfying $(h,n_2)=n_2$, from (\ref{square-4-1}) we have $\phi_h(S_2^2)=t^2$. Thus $\phi_h(S_2^2)$ is always a square numbers for  $1\le h\le[\frac{n-1}{2}]$.

Conversely, assume that $\phi_h(S_2^2)$ is a square number, say $\phi_h(S_2^2)=w_h^2$ ($w_h\geq 0$) for $1\le h\le[\frac{n-1}{2}]$. Notice that $n_2$ is a factor of $n$. We need to consider the following three cases:

\textbf{ Case 1.} $n_2=p_i$ for some $i$ ($1\le i\le r$);

Taking $h$ such that $(h,n_2)=1$, then we also have (\ref{square-4}), from which we obtain that $t>w_h^2$ due to $t\ge 2$, and
\begin{equation}\label{square-5}
p_i=\frac{(t+w_h)(t-w_h)}{t-w_h^2}.
\end{equation}
Note that $t-w_h\ge t-w_h^2\ge1$. First assume that $t-w_h=1$. Then $t-w_h=t-w_h^2=1$, and so $w_h=0$ or $w_h=1$. If $w_h=0$, then $t=w_h+1=1$, which is impossible. If $w_h=1$, then $t=2$, and $p_i=3$ by (\ref{square-5}). Next assume that $t-w_h>1$. Since $p_i$ is a prime, from (\ref{square-5}) we have $t-w_h=t-w_h^2$ or $t+w_h=t-w_h^2$. If $t+w_h=t-w_h^2$, then $w_h=0$; if $t-w_h=t-w_h^2$, then $w_h=0$ or $w_h=1$.
Thus, from (\ref{square-5}) we have  $t=p_i$ if $w_h=0$, and $t=p_i-1$ if $w_h=1$.

\textbf{ Case 2.} $p_i^2|n_2$ for some $i$ ($1\le i\le r$);

By taking $h$ such that $\frac{n_2}{(h,n_2)}=n_2$, we have $\mu(\frac{n_2}{(h,n_2)})=\mu(n_2)=0$, which leads to that $\phi_h(S_2^2)=t$ by (\ref{square-4-1}). Thus $t=|S_2|$ is a square number independent with $h$. By taking another $h$ such that $(\frac{n_2}{(h,n_2)})=p_i$, we have $\phi_h(S_2^2)=\frac{t(p_i-t)}{p_i-1}$ by (\ref{square-4-1}), which gives $t=p_i-1$ or $p_i$ by the arguments of Case 1. Finally, if $t=p_i$ then  $p_i$ is a square number, a contradiction; if $t=p_i-1$, we have $t^2=(p_i-1)^2<p_i(p_i-1)\le\varphi(n_2)$, which  contradicts (\ref{square-3}). Thus, in this case, $\phi_h(T^2)$ can not be always a square number for $1\le h\le[\frac{n-1}{2}]$.

\textbf{ Case 3.} $p_ip_j|n_2$ for some $i\ne j$ ($1\le i,j\le r$).

By taking $h$ such that $\frac{n_2}{(h,n_2)}=p_i$, we have $\phi_h(S_2^2)=\frac{t(p_i-t)}{p_i-1}$ by (\ref{square-4-1}), which gives $t=p_i-1$ or $p_i$ by the arguments of Case 1. Similarly, by taking $h$ such that $\frac{n_2}{(h,n_2)}=p_j$, we have $t=p_j-1$ or $p_j$. Note that $t=|S_2|$ is independent with $h$ and $i\ne j$. We have $p_i-1=p_j$ or $p_i=p_j-1$, which are all impossible because both of $p_i$ and $p_j$ are odd primes. Thus $\phi_h(T^2)$ can not be always a square number for $1\le h\le[\frac{n-1}{2}]$ in this case.

We complete the proof.
\end{proof}
By Lemma \ref{lem-4-2}, we  gives a  specific characterization of integral $X(D_n,S)$ for $S_2^2$ being $2$-integral.
\begin{thm}\label{thm-4-1}
Let $n=p_1^{\alpha_1}p_2^{\alpha_2}\cdots p_r^{\alpha_r}$
where $p_i\ge3$ is a prime. Let $1\not\in S=S_1\cup S_2\subseteq D_n=\langle a,b\mid a^n=b^2=1,bab=a^{-1}\rangle$ with  $S=S^{-1}$, $S_1\subseteq\langle a\rangle$ and $S_2\subseteq b\langle a\rangle$. If  $S_2^2$ is a  $2$-integral, then the Cayley graph $X(D_n,S)$ is integral if and only if
$S_1\in B(\langle a\rangle)$ and $S_2=b a^j\langle a^{\frac{n}{p_i}}\rangle \setminus\{ba^{k\frac{n}{p_i}+j}\}$ or
$b a^j\langle a^{\frac{n}{p_i}}\rangle$,
where $1\le i\le r$, $0\le k\le p_i-1$ and $0\le j\le\frac{n}{p_i}-1$.
\end{thm}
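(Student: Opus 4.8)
The plan is to apply Theorem \ref{thm-3-2}, which reduces integrality of $X(D_n,S)$ to the two conditions $S_1\in B(\langle a\rangle)$ and ``$2\chi_{_h}(S_2^2)$ is a square number for all $1\le h\le[\frac{n-1}{2}]$''. The first condition appears verbatim in both the hypothesis chain and the conclusion, so the whole task is to translate the second condition into the explicit description of $S_2$. Since $S_2^2$ is assumed $2$-integral, write $S_2^2 = t*[1]\cup m_2*[a^{d_2}]$ with $t=|S_2|$, and recall from Lemma \ref{lem-3-3}(1) that $2\chi_{_h}(S_2^2)=4\phi_h(S_2^2)$, so ``$2\chi_{_h}(S_2^2)$ is a square for all $h$'' is equivalent to ``$\phi_h(S_2^2)$ is a square for all $1\le h\le[\frac{n-1}{2}]$''. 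By Lemma \ref{lem-4-2}, this holds if and only if $\frac{n}{d_2}=p_i$ for some $i$ and $t\in\{p_i-1,p_i\}$.

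The remaining step is to show that the structural constraint ``$\frac{n}{d_2}=p_i$ and $|S_2|\in\{p_i-1,p_i\}$'' on the multi-set $S_2^2$ forces $S_2$ itself to be exactly $b a^j\langle a^{\frac{n}{p_i}}\rangle$ (when $t=p_i$) or that set with one element removed (when $t=p_i-1$), and conversely. Write $S_2=\{ba^u\mid u\in U\}$ for some $U\subseteq \mathbb{Z}_n$, so $S_2^2=\{a^{u_1-u_2}\mid u_1,u_2\in U\}$ as a multi-set. Since the only nonzero exponents occurring lie in $[a^{d_2}]$, i.e. have $\gcd$ with $n$ equal to $d_2$, every difference $u_1-u_2$ is a multiple of $d_2=\frac{n}{p_i}$; hence $U$ is contained in a single coset $j+\langle \frac{n}{p_i}\rangle$ of the subgroup of order $p_i$ in $\mathbb{Z}_n$, for some $0\le j\le \frac{n}{p_i}-1$. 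That subgroup has exactly $p_i$ elements, so $|U|=t\le p_i$; if $t=p_i$ then $U$ is the whole coset and $S_2=ba^j\langle a^{\frac{n}{p_i}}\rangle$, while if $t=p_i-1$ then $U$ is the coset minus one element $j+k\frac{n}{p_i}$, giving $S_2=ba^j\langle a^{\frac{n}{p_i}}\rangle\setminus\{ba^{k\frac{n}{p_i}+j}\}$. Conversely, for such an $S_2$ one checks directly (using that differences within a $p_i$-element cyclic group, or a $(p_i-1)$-element punctured coset, distribute as a Ramanujan-type sum) that $S_2^2$ is $2$-integral with $n/d_2=p_i$ and $t\in\{p_i-1,p_i\}$, so Lemma \ref{lem-4-2} and Theorem \ref{thm-3-2} give integrality; alternatively note $bS_2\in B(\langle a\rangle)$ when $t=p_i$ and invoke Corollary \ref{cor-3-3}, handling the $t=p_i-1$ case by the square-count in \eqref{square-4}.

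The main obstacle is the bookkeeping in the converse (necessity-to-structure) direction: one must argue that $2$-integrality plus the numerical conclusion of Lemma \ref{lem-4-2} pins down $U$ up to the coset and the single deleted point, and in particular that no smaller subset can accidentally produce the same multi-set $S_2^2$. This is handled cleanly by the observation that all differences of $U$ lie in the order-$p_i$ subgroup together with the cardinality count $|U|=t\in\{p_i-1,p_i\}$, which leaves no freedom beyond the choice of coset representative $j$ and (when $t=p_i-1$) the deleted element. Everything else is a direct substitution into the already-proved Theorem \ref{thm-3-2} and Lemmas \ref{lem-3-3} and \ref{lem-4-2}.
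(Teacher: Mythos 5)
Your main argument is correct and follows essentially the same route as the paper: reduce via Theorem \ref{thm-3-2} and Lemma \ref{lem-3-3} to the square condition on $\phi_h(S_2^2)$, invoke Lemma \ref{lem-4-2} to get $n/d_2=p_i$ and $t\in\{p_i-1,p_i\}$, and then observe that all pairwise differences of the exponent set $U$ lying in the order-$p_i$ subgroup $\langle n/p_i\rangle\le\mathbb{Z}_n$ forces $U$ into a single coset, which the cardinality bound pins down as the full coset or a punctured one (the paper does this by explicitly translating by $u_1$; your coset phrasing is the same idea, slightly tidier). One caveat: your proposed shortcut for sufficiency, ``note $bS_2\in B(\langle a\rangle)$ when $t=p_i$ and invoke Corollary \ref{cor-3-3},'' is wrong for $j\neq 0$ --- a nontrivial coset such as $\{a,a^4,a^7\}\subseteq\langle a\rangle$ with $n=9$ is a proper subset of the atom $[a]$ and hence not a union of atoms --- so you must rely on the direct computation of $S_2^2$ and Lemma \ref{lem-4-2}, as your primary argument (and the paper) does.
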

\begin{proof}
If $S_2=b a^j\langle a^{\frac{n}{p_i}}\rangle\setminus\{ba^{k\frac{n}{p_i}+j}\}$, we have $$\begin{array}{ll}S_2^2&=(b a^j\langle a^{\frac{n}{p_i}}\rangle\cdot b a^j\langle a^{\frac{n}{p_i}}\rangle \setminus 2*(b a^j\langle a^{\frac{n}{p_i}}\rangle\cdot\{ba^{k\frac{n}{p_i}+j}\})\cup\{ba^{k\frac{n}{p_i}+j}\cdot ba^{k\frac{n}{p_i}+j}\} \\
&=(p_i*\langle a^{\frac{n}{p_i}}\rangle \setminus 2*\langle a^{\frac{n}{p_i}}\rangle)\cup\{1\} \\
&=(p_i-1)*\{1\}\cup (p_i-2)*[a^{\frac{n}{p_i}}].
\end{array}
$$ Similarly,
if $S_2=b\langle a^{\frac{n}{p_i}}\rangle a^j$,
we have $S_2^2=p_i*\{1\}\cup p_i*[a^{\frac{n}{p_i}}]$. Thus, by Lemma \ref{lem-4-2},
$\phi_h(S_2^2)$ is a square number for all $1\le h\le [\frac{n-1}{2}]$,
so are $2\chi_h(S_2^2)$($=4\phi_h(S_2^2)$). By Theorem \ref{thm-3-2}, $X(D_n,S)$ is integral and
the sufficiency follows.

For the necessity, let $S_2^2$ be $2$-integral, we may assume that
\begin{equation}\label{T-eq-1}
S_2^2=t*[1]\cup m_2*[a^{d_2}]
\end{equation}
where $t=|S_2|\ge 2$ and $d_2|n$. Since $X(D_n,S)$ is integral, $\phi_h(S_2^2)$ presented in (\ref{square-4-1}) is a square number for all $1\le h\le [\frac{n-1}{2}]$.
By Lemma \ref{lem-4-2}, there exists $1\le i\le r$ such that $n_2=\frac{n}{d_2}=p_i$ and $t=p_i-1$ or $p_i$.
If $t=p_i-1$,
we have $m_2=\frac{t^2-t}{\varphi(n_2)}=p_i-2$ from (\ref{square-3}). Thus we may assume that $S_2=\{ba^{u_1},\ldots,ba^{u_{p_i-1}}\}$,
combining  (\ref{T-eq-1}) we have
$$(p_i-1)*\{1\}\cup (p_i-2)*[a^{\frac{n}{p_i}}]=S_2^2=\{a^{u_s-u_t}\mid 1\le s,t\le p_i-1\}=(p_i-1)*\{1\}\cup\{a^{u_s-u_t}\mid s\ne t\}.$$
Therefore, $a^{u_s-u_t}\in [a^{\frac{n}{p_i}}]$ for any $s\ne t$.
Since $a^{u_2-u_1},\ldots,a^{u_{p_i-1}-u_1}$ are different elements in $[a^{\frac{n}{p_i}}]$,
there exists $k_1$ ($1\le k_1\le p_i-1$) such that \begin{equation}\label{T-eq-2}
\{a^{u_2-u_1},\ldots,a^{u_{p_i-1}-u_1}\}=[a^{\frac{n}{p_i}}]\setminus\{a^{k_1\frac{n}{p_i}}\}.
\end{equation}
 Note that $u_1$ can be written as $u_1=k_2\frac{n}{p_i}+j$, where $0\le k_2\le p_i-1$ and $0\le j\le\frac{n}{p_i}-1$. Then $a^{u_1}=a^{k_2\frac{n}{p_i}+j}$ and from (\ref{T-eq-2}) we have
\begin{eqnarray*}
bS_2&=&\{a^{u_1},a^{u_2},\ldots,a^{u_{p_i-1}}\}=\{a^{u_1}\}\cup\{a^{u_2-u_1},\ldots,a^{u_{p_i-1}-u_1}\}\cdot \{a^{u_1}\}\\
&=&\{a^{u_1}\}\cup([a^{\frac{n}{p_i}}]\setminus\{a^{k_1\frac{n}{p_i}}\})\cdot \{a^{u_1}\}\\
&=&\{a^{k_2\frac{n}{p_i}+j},a^{(k_2+1)\frac{n}{p_i}+j},\ldots,a^{(k_2+p_i-1)\frac{n}{p_i}+j}\}\setminus\{a^{(k_1+k_2)\frac{n}{p_i}+j}\}\\
&=&\{a^j,a^{\frac{n}{p_i}+j},\ldots,a^{(p_i-1)\frac{n}{p_i}+j}\}\setminus\{a^{k\frac{n}{p_i}+j}\}\ \mbox{ where $0\le k\le p_i-1$}\\
&=&\langle a^{\frac{n}{p_i}}\rangle a^j\setminus\{a^{k\frac{n}{p_i}+j}\}.
\end{eqnarray*}
Thus $S_2=b a^j\langle a^{\frac{n}{p_i}}\rangle\setminus\{ba^{k\frac{n}{p_i}+j}\}$.
Similarly, if $t=p_i$, we have $m_2=p_i$. Assume that $S_2=\{ba^{u_1},\ldots,ba^{u_{p_i}}\}$, combining (\ref{T-eq-1}) we have
 $$p_i*\{1\}\cup p_i*[a^{\frac{n}{p_i}}]=S_2^2=\{a^{u_s-u_t}\mid 1\le s,t\le p_i-1\}=p_i*\{1\}\cup\{a^{u_s-u_t}\mid s\ne t\}.$$
 So we have $\{a^{u_2-u_1},\ldots,a^{u_{p_i}-u_1}\}=[a^{\frac{n}{p_i}}]$.
 Similarly, $a^{u_1}$ can be written as $a^{u_1}=a^{k_2\frac{n}{p_i}+j}$, then we have $bS_2=a^j\langle a^{\frac{n}{p_i}} \rangle$ and so $S_2=b a^j\langle a^{\frac{n}{p_i}}\rangle$.

We complete this proof.
\end{proof}

\begin{thm}\label{cor-4-1}
For an odd prime $p$, let $D_p=\langle a,b\mid a^p=b^2=1,bab=a^{-1}\rangle$ and   $S=S_1\cup S_2$ such that $S=S^{-1}$, where $S_1\subseteq\langle a\rangle$, $S_2\subseteq b\langle a\rangle$. Then the Cayley graph $X(D_p,S)$ is integral if and only if $S_1\in B(\langle a\rangle)$ and $S_2=b\langle a\rangle\backslash ba^k$, $ b\langle a\rangle$ or $\{ba^k\}$ where $0\le k\le p-1$.
\end{thm}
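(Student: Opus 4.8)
The plan is to derive Theorem \ref{cor-4-1} as the special case $n=p$ of the machinery already built up, and the key observation is that when $n=p$ is prime the structure of $B(\langle a\rangle)$ and of the possible squares $S_2^2$ collapses almost entirely. First I would note that the proper divisors of $p$ are only $1$ and $p$, so the atoms of $B(\langle a\rangle)$ are exactly $[1]=\{1\}$ and $[a]=\langle a\rangle\setminus\{1\}$; hence the condition $S_1\in B(\langle a\rangle)$ is simply that $S_1\in\{\emptyset,\{1\},[a],\langle a\rangle\}$, and since $1\notin S$ this amounts to $S_1=\emptyset$ or $S_1=[a]=\langle a\rangle\setminus\{1\}$. (I would phrase the statement so that "$S_1\in B(\langle a\rangle)$" already encodes this; no extra work is needed there beyond invoking Corollary \ref{cor-3-4} for necessity and Theorem \ref{thm-3-2}/Corollary \ref{cor-3-3} for sufficiency of the $S_1$ part.)

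The heart of the argument is the analysis of $S_2$. By Corollary \ref{cor-3-4}, integrality forces $S_2^2\in C(\langle a\rangle)$, so $S_2^2$ is a union of multiples of the atoms $[1]$ and $[a]$; that is, $S_2^2$ is either $1$-integral (when $|S_2|=1$) or $2$-integral (when $|S_2|\ge 2$, since then $S_2^2$ contains $1$ with multiplicity $|S_2|$ and also some nonidentity element, forcing the $[a]$-part to appear). The case $|S_2|=1$ gives $S_2=\{ba^k\}$ and is handled directly by Lemma \ref{lem-4-1}. For $|S_2|\ge 2$ I would invoke Theorem \ref{thm-4-1} with $n=p$, $r=1$, $p_1=p$: the only choice of $i$ is $i=1$, $\frac{n}{p_i}=1$, so $\langle a^{\frac{n}{p_i}}\rangle=\langle a\rangle$ and $a^j=1$ (the index $j$ ranges over $0\le j\le \frac{n}{p_i}-1=0$, so $j=0$), and the two alternatives of Theorem \ref{thm-4-1} become exactly $S_2=b\langle a\rangle\setminus\{ba^k\}$ (with $0\le k\le p-1$) and $S_2=b\langle a\rangle$. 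Combining the $|S_2|=1$ case and the $|S_2|\ge2$ case yields precisely the three families in the statement.

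The remaining point to verify is that these descriptions of $S_2$ are indeed consistent with the requirement $S=S^{-1}$ and $1\notin S$, which I would check is automatic: every element of $b\langle a\rangle$ is an involution, so any subset of $b\langle a\rangle$ is inverse-closed, and no element of $b\langle a\rangle$ equals $1$; thus the constraints $S=S^{-1}$, $1\notin S$ impose nothing on $S_2$ beyond $S_2\subseteq b\langle a\rangle$, and restrict $S_1$ only to exclude $1$, which we already accounted for. I do not anticipate a genuine obstacle here — the proof is essentially bookkeeping that specializes Theorem \ref{thm-4-1} and Lemma \ref{lem-4-1}. The one place that warrants a careful sentence is making sure the $1$-integral and $2$-integral cases together exhaust all possibilities for $S_2$ (i.e. that $S_2^2$ cannot be $k$-integral with $k\ge 3$ when $n=p$), which follows immediately because $\langle a\rangle$ has only two atoms when $p$ is prime.
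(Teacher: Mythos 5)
Your proposal is correct and follows essentially the same route as the paper: necessity via Corollary \ref{cor-3-4}, the observation that $B(\langle a\rangle)$ has only the two atoms $[1]$ and $[a]$ when $p$ is prime (so $S_2^2$ is $1$- or $2$-integral), Lemma \ref{lem-4-1} for the singleton case, and the specialization of Theorem \ref{thm-4-1} with $r=1$, $\frac{n}{p_i}=1$, $j=0$ for $|S_2|\ge 2$. The only cosmetic difference is that for sufficiency the paper recomputes $S_2^2$ and applies Lemma \ref{lem-4-2} and Theorem \ref{thm-3-2} directly rather than citing the sufficiency direction of Theorem \ref{thm-4-1}, which amounts to the same thing.
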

\begin{proof}
If $S_2=\{ba^k\}$, then $X(D_p,S)$ is integral by Lemma \ref{lem-4-1}. If $S_2=b\langle a\rangle\backslash ba^k$, then $S_2^2=(p-1)*\{1\}\cup (p-2)*[a]$, thus $d_2=1$ and $t=|S_2|=p-1$. By Lemma \ref{lem-4-2}, $\phi_h(S_2^2)$ is a square number for all $1\le h\le[\frac{p-1}{2}]$, so are $2\chi_h(S_2^2)$. Therefore, by Theorem \ref{thm-3-2}, $X(D_p,S)$ is integral. Similarly, if $S_2=b\langle a\rangle$, we have $S_2^2=p*\{1\}\cup p*[a]$, and so $X(D_p,S)$ is integral. We get the sufficiency. In what follows we consider the necessity.

Suppose $X(D_p,S)$ is integral, by Corollary \ref{cor-3-4}, we have $S_1\in B(\langle a\rangle)$ and $S_2^2\in C(\langle a\rangle)$. First suppose that $t=|S_2|=1$. We have $S_2=\{ba^k\}$ for some $0\le k\le p-1$. Next suppose that $t=|S_2|>1$. We see that
 $S_2^2$ is $2$-integral because $B(\langle a\rangle)$ has only two atoms. By Theorem \ref{thm-4-1}, we have $|S_2|=p-1$ or $p$. If $|S_2|=p-1$, we obtain that $S_2=b\langle a\rangle\backslash ba^k$ where $0\le k\le p-1$ (note that $S_2=b[a]$ while $k=0$); if $|S_2|=p$, we obtain that  $T=b\langle a\rangle=[b]\cup b[a]$.
\end{proof}
By Corollary \ref{cor-3-4}, if $X(D_n,S)$ is integral then $S_1\in B(\langle a\rangle)$ and $S_2^2\in C(\langle a\rangle)$, which means that $S_1$ and $S_2^2$ are clearly found. If we can obtain $S_2$ from (\ref{square-0}), the integral $X(D_n,S)$ will be finally determined by verifying if $2\chi_{_h}(S_2^2)$ is square number (see Theorem \ref{thm-3-2}). However, it seems difficult to do this even if $n$ is a prime. Example \ref{exa-4} provides of an instance that $S_2^2=\{1,1,1,a,a^2,a^3,a^4,a^5,a^6\}=3*[1]\cup [a]\in C(\langle a\rangle)$, but $S_2=\{ba,ba^2,ba^4\}$ is not of the forms stated in Theorem \ref{cor-4-1}( i.e.,  $S_2=b\langle a\rangle\backslash ba^k$, $ b\langle a\rangle$ or $\{ba^k\}$). Hence $X(D_7,S_2)$ is not integral by Theorem \ref{cor-4-1}. In fact, we have many such instances, say, $S_2=\{ba,ba^3,ba^4,ba^8\}\subset b\langle a\rangle$, where $|\langle a\rangle|=13$, is not of the forms stated in Theorem \ref{cor-4-1}, but $S_2^2=13*[a^{13}]\cup [a]$. Also, $X(D_{13},S_2)$ is not integral by Theorem \ref{cor-4-1}.

\end{document}